\def\Ker{\mathop{\mathrm{Ker}}\nolimits}
\def\GL{\mathop{\mathrm{GL}}\nolimits}
\def\mod{\mathop{\mathrm{mod}}\nolimits}
\newcommand{\mf}[1]{{\mathfrak{#1}}}
\newcommand{\bb}[1]{{\mathbb{#1}}}
\newcommand{\mca}[1]{{\mathcal{#1}}}
\newcommand{\inj}{\hookrightarrow}
\newcommand{\surj}{\twoheadrightarrow}
\newcommand{\act}{\curvearrowright}
\newcommand{\congto}{\overset{\cong}{\to}}
\newcommand{\N}{\bb{N}}
\newcommand{\Z}{\bb{Z}}
\newcommand{\Zp}{\bb{Z}_{p}}
\newcommand{\Q}{\bb{Q}}
\newcommand{\Qp}{\bb{Q}_{p}}
\newcommand{\C}{\bb{C}}
\newcommand{\Cp}{\bb{C}_p}
\newcommand{\F}{\bb{F}}
\newcommand{\Fp}{\bb{F}_p}
\newcommand{\m}{\mf{m}}
\newcommand{\mh}{\textsc{m}}
\newcommand{\ol}{\overline}
\newcommand{\ds}{\displaystyle}
\newcommand{\wh}[1]{{\widehat{#1}}}
\DeclareMathOperator*{\restprod}%
 {\mathchoice{\ooalign{\ensuremath{\displaystyle\prod}\crcr\ensuremath{\displaystyle\coprod}}}%
             {\ooalign{\ensuremath{\textstyle\prod}\crcr\ensuremath{\textstyle\coprod}}}%
             {\ooalign{\ensuremath{\scriptstyle\prod}\crcr\ensuremath{\scriptstyle\coprod}}}%
             {\ooalign{\ensuremath{\scriptscriptstyle\prod}\crcr\ensuremath{\scriptscriptstyle\coprod}}}%
}
\theoremstyle{definition}
\newtheorem{thm}{Theorem}[section]
\newtheorem{crl}[thm]{Corollary}
\newtheorem{lmm}[thm]{Lemma}
\newtheorem{prp}[thm]{Proposition}
\newtheorem{dfn}[thm]{Definition}
\newtheorem{exa}[thm]{Example}
\theoremstyle{remark}
\newtheorem{rem}[thm]{Remark}
\title{$p$-adic Mahler measure and $\Z$-covers of links
}
\author{Jun Ueki}
\date{\today}
\subjclass[2010]{Primary 37B40, 57M12, 11R06; Secondary 11R23.}
\keywords{link, branched covering, Mahler measure, entropy, $p$-adic.}
\begin{document}
\maketitle  

\begin{abstract} 
Let $p$ be a prime number. We develop a theory of $p$-adic Mahler measure of polynomials and apply it to the study of $\Z$-covers of rational homology 3-spheres branched over links. 
We obtain a $p$-adic analogue of the asymptotic formula of the torsion homology growth and a balance formula among the leading coefficient of the Alexander polynomial, the $p$-adic entropy, and the Iwasawa $\mu_p$-invariant. 
We also apply the purely $p$-adic theory of Besser--Deninger to $\Z$-covers of links. 
In addition, we study the entropies of profinite cyclic covers of links. 
We examine various examples throughout the paper. 
\end{abstract}


{
\tableofcontents }

\section{Introduction} 
It is well known that the asymptotic behavior of the torsion homology growth in a $\Z$-cover of $S^3$ branched over a link is described by the Euclidean Mahler measure of the reduced Alexander polynomial of the branch link. 

The Euclidean Mahler measure $\mh(f(t))$ of a Laurent polynomial $0\neq f(t) \in \C[t^{\pm1}]$ is defined by 
$\ds \log \mh(f(t))=\frac{1}{2\pi\sqrt{-1}}\int _{|z|=1} \log|f(z)|\frac{dz}{z}$. 
If $\ds f(t)=a\prod_i (t-\alpha_i)$, then we have Jensen's formula $\ds \mh(f(t))=|a|\prod_i {\rm max}\{|\alpha_i|,1\}$ (\cite{EverestWard1999}). 

If $H$ is a finite group, then we denote its order by $|H|$. 
If $H$ is an infinite group, then we put $|H|=0$. 
For two polynomials $f(t)$ and $g(t)$ in $\Z[t]$, we denote their resultant by $R(f(t),g(t))$. 
Now let $M$ be a rational homology 3-sphere ($\Q$HS$^3$) and let $L$ be a link in $M$. 
Let $h_{\infty}:X_\infty \to X:=M-L$ be a $\Z$-cover over the exterior with a generator $t$ of the deck transformation group and let $\{h_n:M_n\to M\}_n$ denote the system of branched $\Z/n\Z$-covers over $(M,L)$ obtained from subcovers of $h_\infty$ by the Fox completion. 
The Alexander module $H_1(X_\infty)$ is a finitely generated $\Z[t^\Z]$-module. 
We take a generator  $A_L(t)$ of the maximum principal ideal containing the Fitting ideal ${\rm Fitt}_{\Z[t^\Z]} H_1(X_\infty)$ of $H_1(X_\infty)$ over $\Z[t^{\Z}]$ with $A_L(t)\in \Z[t]\subset \Z[t^\Z]$ 
and call it \emph{an/the Alexander polynomial of} $h_\infty$. 
Put $\nu_n(t):=(t^n-1)/(t-1)$. 
Then by a generalization of Fox's formula (Proposition \ref{Fox}, e.g. \cite{KM2008}), there exists some finite set $\mathcal{C}\subset \N$ such that for any $n\gg0$ the resultant $R(A_L(t), \nu_n(t))$ satisfies 
$|H_1(M_n)|=|R(A_L(t),\nu_n(t))|\times C$ for some $C\in\mca{C}$. 
By a similar argument to \cite{GS1991}, we obtain an asymptotic formula of the homology growth (Theorem \ref{pGS}):  
$\ds \lim_{n\in \N;\text{value}\neq 0} |H_1(M_n)|^{1/n}=\mh(A_L(t))$. 

For a prime number $p$, let $|\cdot|_p$ denote the $p$-adic norm (absolute value) normalized by $|p|_p=p^{-1}$. 
If $H$ is a group, then we write $||H||_p=|(|H|)|_p$. 
For an inverse system $\{M_{p^r}\to M\}_r$ of branched $\Z/p^r\Z$-covers over $(M,L)$, a $p$-adic refinement of the asymptotic formula called \emph{the Iwasawa type formula} (Proposition \ref{Iwasawa}) is known: 
If $M_{p^r}$'s are $\Q$HS$^3$'s, then there exist some $\lambda_p,\mu_p \in \N, \nu_p\in \Z$ called the Iwasawa invariants such that for any $r\gg 0$ the equation $||H_1(M_{p^r})||_p^{-1}=p^{\lambda_p r +\mu_p p^r+\nu_p}$ holds. 

The main purpose of this paper is to investigate the relation between the Mahler measure and the Iwasawa invariants. \\

For a prime number $p$, let $\Cp=\wh{\ol{\Q}}_p$ denote the $p$-adic completion of an algebraic closure of the $p$-adic number field $\Qp$, and fix an immersion $\ol{\Q}\inj \Cp$ of an algebraic closure of $\Q$. (We regard $p=\infty$ for Euclidean objects.)  

We define the $p$-adic Mahler measure $\mh_p(f(z))$ of a Laurent polynomial $f(t)\in \Cp[t^{\pm1}]$ ($\neq 0$) by imitating the Shnirel'man integral (\cite{Schnirelmann1938}) as follows: 
We first suppose that $f(t)$ does not vanish on the $p$-adic unit circle $|z|_p=1$ and 
put 
$$\log \mh_p(f(z))
:=\lim_{n\in \N; {\rm gcd}(n,p)=1}\frac{1}{n}\sum_{\zeta^n=1}\log |f(\zeta)|_p,$$
where the limit is taken with respect to the Euclidean topology. 
If $\ds f(t)=\sum_{0\leq i\leq l} a_it^{d-i}=a_0 t^{d-l} \prod_{0\leq i\leq l}(t-\alpha_i)$ with $d\in \Z$ and $l\in \N$, then an analogue $\ds \mh_p(f(t))=|a_0|_p \prod_{0\leq i\leq l} {\rm max}\{|\alpha_i|_p,1\}$ of Jensen's formula holds (Theorem \ref{pJensen}). In addition, the equation $\mh_p(f(t))={\rm max}\{|a_i|_p\}_i$ holds (Proposition \ref{Mp-cal}), that is, 
$\mh_p(f(t))$ coincides with the Gauss norm of $f(t)$. 

We extend this notion with Jensen's formula to the case with roots on $|z|_p=1$ and 
remove the condition ${\rm gcd}(n,p)=1$ on the limit (Theorem \ref{Mp-ext1}). 
Then for a $\Z$-cover over $(M,L)$ with its polynomial $A_L(t)$, we obtain a $p$-adic analogue 
$\ds \lim_{n\in \N;\text{value}\neq 0} ||H_1(M_n)||_p^{1/n}=\mh_p(A_L(t))$ 
of the asymptotic formula (Theorem \ref{pGS}). 
In addition, 
the formula $\log \mh_p(A_L(t))=-\mu_p \log p$ of the Iwasawa invariant follows 
(Proposition \ref{A-mu}).\\ 

Now we consider a $\Z$-cover over $(S^3,L)$ with $\ds A_L(t)=\sum_{0\leq i\leq l} a_it^{d-i}=a_0 t^{d-l}\prod_{0\leq i\leq l}(t-\alpha_i)$ over $\ol{\Q}$. 
The relation between the Mahler measure and the topological entropy is well known. 
It is also known that $a_0$ has a topological information of $L$. 
Noguchi studied the relation among them from a viewpoint of $p$-adic topology (\cite{Noguchi2007}). 
We generalize his work for links. 

We consider the meridian action on $H_1(X_\infty,\Q)$ and its Pontryagin dual. 
Then by \cite{BrunoVirili2015} and \cite{LindWard1988}, the algebraic/topological entropy is given by $\ds h=\sum_{p\leq \infty}h_p=$ $\ds \sum_{p\leq \infty} \log \mh_p(A_L(t))$ with use of the $p$-adic entropies \ $\ds h_p=\log \mh_p(A_L(t)/a_0)=$ \ $\ds \sum_{|\alpha_i|_p>1}\log |\alpha_i|_p$ (Proposition \ref{Z-entropy}). 
We see a balance formula among $a_0$, $h_p$, and the Iwasawa $\mu_p$-invariant (Proposition \ref{hp}): 
$$-\log |a_0|_p=h_p+\mu_p \log p.$$ 
By the product formula $\ds \prod_{p\leq \infty}|a_0|_p=1$, 
a generalization of \cite[Corollary 4]{Noguchi2007} follows:
$\ds \log |a_0|=\sum_{p<\infty}\sum_{|\alpha_i|_p>1}\log |\alpha_i|_p+\sum_{p<\infty}\mu_p\log p$
(Corollary \ref{a_0}). \\ 

Besser and Deninger (\cite{BesserDeninger1999}) defined the \emph{purely} $p$-adic log Mahler measure $m_p$, which is different from ours, with use of the $p$-adic logarithm and the Shnirel'man integral, and proved Jensen's formula for it. 
In addition, Deninger (\cite{Deninger2009}) introduced the notion of the \emph{purely} $p$-adic entropy $\hbar_p$ by using the numbers of fixed points and the $p$-adic logarithm, and proved the relation between $m_p$ and $\hbar_p$. 
In Section 4, we apply their theory to $\Z$-covers of $S^3$ branched over links. \\ 

Finally, in Section 5, we study profinite cyclic covers of $S^3$ branched over links. 
We consider the $p$-adic integer ring $\Zp=\varprojlim_n \Z/p^n\Z$ and 
the profinite integer ring $\wh{\Z}=\varprojlim_n \Z/n\Z$. 
We investigate the entropies of the meridian actions on the Iwasawa modules $\mca{H}_p$ of $\Zp$-covers 
and those on the completed Alexander modules $\mca{H}$ of $\wh{\Z}$-covers for the cases with $A_L(t)\in \Z[t]$.\\ 
\ \ \ \ In Sections 3--5, we compute various examples obtained from links in the Rolfsen table (\cite{Rolfsen1976}).

\section{
$p$-adic Mahler measure of polynomials} 
\subsection{Euclidean Mahler measure}
We consider the complex function on $\C^*$ given by a Laurent polynomial $0\neq f(t)\in \C[t^{\pm1}]$. 
If $\ds f(t)=\sum_{0\leq i\leq l}a_it^{d-i}=a_0 t^{d-l}\prod_{0\leq i\leq l}(t-\alpha_i)$ with $a_0a_l\neq 0$ and it has no root on the unit circle $|z|=1$, then 
its Euclidean Mahler measure $\mh(f(t))$ is defined by 
$$\log \mh(f(t))=\frac{1}{2\pi\sqrt{-1}}\int_{|z|=1}\log|f(z)|\frac{d z}{z}.$$
It satisfies Jensen's formula 
\begin{align*} 
\log \mh(f(t))
&=\log|a_l|-\sum_{0<|\alpha_i|<1} \log |\alpha_i|\\
&=\log |a_0|+\sum_{1<|\alpha_i|}\log |\alpha_i|.
\end{align*} 
It is generalized to the cases with roots on $|z|=1$. 
We define $\mh(f(t))$ as the limit of the integrals along paths avoiding roots on $|z|=1$ \cite{EverestWard1999}. 
Indeed, suppose $f(\alpha)=0, |\alpha|=1$. Then for each simple closed curve $\gamma$ around $\alpha$ with finite length $|\gamma|$, 
the integration of a bounded continuous function $\log |f(z)|/z$ along $\gamma$ is bounded by $|\gamma|\times (\text{constant})$. Thus we have $\ds \lim_{|\gamma|\to 0} \int_{\gamma}\log|f(z)|\frac{dz}{z}=0$. \\ 
  
For polynomials $f(t)=a\prod_i (t-\alpha_i)$, $g(t)=b\prod_j(t-\beta_j) \in \Z[t]$, 
their resultant is defined as $$R(f(t),g(t)):=a^{\deg(g)}b^{\deg (f)}\prod_{i,j}(\alpha_i-\beta_j).$$ 
It coincides with the determinant of the Sylvester matrix, whose elements are given by their coefficients, 
and hence satisfies $R(f(t),g(t))\in \Z$. 
We have $R(f(t),g(t))=0$ if and only if they gave no common zero. 
We have 
\begin{align*}
R(f(t),g(t))&=a^{\deg(g)}\prod_i g(\alpha_i)=b^{\deg(f)}\prod_j f(\beta_j),\\
R(f(t),g(t))&=(-1)^{\deg(fg)}R(g(t),f(t)),\\ 
R(f(t),g(t)h(t))&=R(f(t),g(t))R(f(t),h(t))  
\end{align*}
for each $h(t)\in \Z[t]$. 
If $f(t)=0$, then we put $R(0,g(t))=0$. 
If $g(t)\neq 0$, then we put $R(1,g(t))=1$ (\cite{Weber1979}). \\

By an argument in \cite{GS1991}, 
we have an asymptotic formula of the resultants: 
\begin{prp} \label{asym}
Let $0\neq f(t)\in \Z[t]$ and $\ds \nu_n(t)=(t^n-1)/(t-1)=\sum_{0\leq i<n} t^i$. Then 
$$\lim_{n\in \N;\text{value}\neq 0}|R(f(t),\nu_n(t))|^{1/n}=\mh(f(t)).$$ 
\end{prp}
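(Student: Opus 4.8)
The plan is to factor $f(t) = a\prod_i(t-\alpha_i)$ over $\ol{\Q}$ (or $\C$) and reduce the statement to understanding, for each fixed root $\alpha$, the growth rate of $|\nu_n(\alpha)|$, then multiply across roots. Using $R(f(t),\nu_n(t)) = a^{\deg \nu_n}\prod_i \nu_n(\alpha_i) = a^{n-1}\prod_i \nu_n(\alpha_i)$, and $\nu_n(\alpha) = (\alpha^n-1)/(\alpha-1)$ when $\alpha\neq 1$, we get
\begin{align*}
|R(f(t),\nu_n(t))|^{1/n} = |a| \cdot \prod_i |\nu_n(\alpha_i)|^{1/n}.
\end{align*}
So it suffices to show $\lim_{n} |\nu_n(\alpha)|^{1/n} = \max\{|\alpha|,1\}$ along the subsequence where the value is nonzero, together with the matching factor of $|a|$; comparing with Jensen's formula $\mh(f(t)) = |a|\prod_i \max\{|\alpha_i|,1\}$ then finishes it. Case analysis on $|\alpha|$: if $|\alpha|>1$ then $|\alpha^n - 1| \sim |\alpha|^n$, so $|\nu_n(\alpha)|^{1/n} = |\alpha^n-1|^{1/n}/|\alpha-1|^{1/n} \to |\alpha|$; if $|\alpha|<1$ then $|\alpha^n-1|\to 1$ and $|\alpha-1|^{1/n}\to 1$, so the limit is $1$; if $\alpha=1$ then $\nu_n(1) = n$ and $n^{1/n}\to 1$.

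The delicate case is $|\alpha|=1$ with $\alpha\neq 1$ (roots on the unit circle). Here $|\nu_n(\alpha)| = |\alpha^n-1|/|\alpha-1|$ oscillates: if $\alpha$ is a root of unity, $\nu_n(\alpha)$ is periodically zero (these $n$ are excluded by the hypothesis "value $\neq 0$"), and on the remaining $n$ it is bounded, so $|\nu_n(\alpha)|^{1/n}\to 1$. If $\alpha = e^{2\pi i \theta}$ with $\theta$ irrational, $\nu_n(\alpha)$ is never zero, but $|\alpha^n-1|$ can be very small for $n$ with $\|n\theta\|$ small; the point is that by Dirichlet/continued-fraction estimates $|\alpha^n-1|$ cannot be smaller than roughly a polynomial (in fact $n^{-1}$-type) lower bound infinitely often is not quite enough — what we actually need is only that $\log|\alpha^n - 1| = o(n)$, which holds because $|\alpha^n-1|\le 2$ always gives the upper bound, and for the lower bound one uses that $|\alpha^n-1| \ge c/n$ fails in general but $\frac{1}{n}\sum$-type averaging is not what's at issue here — rather, since we take $\lim$ (not $\liminf$), I should instead invoke the GS-type argument directly: the subsequence along which $R(f,\nu_n)\neq 0$ has $|R(f,\nu_n)|^{1/n}$ converging, and the limit is controlled by $\limsup \frac1n\log|\nu_n(\alpha)| \le 0$ (trivial) and $\liminf \frac1n \log|\nu_n(\alpha)|\ge 0$, the latter because any subsequential limit of $\frac1n\log|\alpha^n-1|$ is $\le 0$ and also $\ge 0$ since $|\alpha^n - 1|$ is bounded below by $e^{-o(n)}$ along any sequence by a standard Baker-type or elementary pigeonhole bound ($|\alpha^n-1|\gg n^{-1}$ is false, but $|\alpha^n-1| \gg e^{-\epsilon n}$ holds for all but finitely many $n$ for each $\epsilon$, which suffices).

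The main obstacle is thus making the unit-circle case rigorous: one must show $\frac{1}{n}\log|\alpha^n - 1| \to 0$ along the full sequence of $n$ for which it is defined and nonzero. I would handle this exactly as in \cite{GS1991}: reduce to the cyclotomic and non-cyclotomic factors of $f$ separately (the cyclotomic part contributes the excluded zeros and otherwise bounded resultants, hence limit $1$), and for the non-cyclotomic part with unit-circle roots use the lower bound on $|\alpha^n-1|$ coming from the fact that $\alpha$ and its conjugates are algebraic units whose powers cannot approach $1$ too fast — more precisely, $\prod_{\sigma}|\sigma(\alpha)^n - 1| = |R(\text{min.poly}, t^n-1)|/(\text{leading coeff power})$ is a nonzero rational integer when nonzero, hence $\ge 1$ in absolute value, which immediately gives $\sum_\sigma \log|\sigma(\alpha)^n-1| \ge 0$ and the needed lower bound after noting each term is $\le \log 2$. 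I would then assemble: group roots by absolute value, apply the three easy cases plus this integrality argument for the unit-circle roots, multiply, and compare with Jensen's formula to conclude
\begin{align*}
\lim_{n\in\N;\,\text{value}\neq 0}|R(f(t),\nu_n(t))|^{1/n} = |a|\prod_i \max\{|\alpha_i|,1\} = \mh(f(t)).
\end{align*}
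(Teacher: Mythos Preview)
Your overall strategy matches the paper's: factor $f$, reduce to showing $|\alpha^n-1|^{1/n}\to\max\{|\alpha|,1\}$ for each root $\alpha$, dispose of the easy cases $|\alpha|\neq 1$ and $\alpha=1$, and isolate the hard case $|\alpha|=1$ with $\alpha$ not a root of unity. The paper resolves that last case by citing the Baker-type lower bound $|\alpha^n-1|>C\exp\bigl(-(\log n)^6\bigr)$ from \cite{GS1991}, which immediately gives $\frac{1}{n}\log|\alpha^n-1|\to 0$.

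Your integrality argument for this case has a genuine gap. You assert that $\sum_\sigma\log|\sigma(\alpha)^n-1|\ge 0$ (this already requires $\alpha$ to be an algebraic \emph{integer}; if the minimal polynomial has leading coefficient $c$, the resultant bound only gives $\ge -n\log|c|$) together with ``each term is $\le\log 2$'' yields the desired lower bound on $\log|\alpha^n-1|$. But ``each term $\le\log 2$'' is false whenever some conjugate satisfies $|\sigma(\alpha)|>1$: then $\log|\sigma(\alpha)^n-1|\sim n\log|\sigma(\alpha)|$. And by Kronecker's theorem, an algebraic integer with $|\alpha|=1$ that is not a root of unity \emph{must} have a conjugate off the unit circle. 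So in precisely the case you need, your bound degrades to $\frac{1}{n}\log|\alpha^n-1|\gtrsim -\log\mh(g)$ with $\mh(g)>1$, which does not give the limit $0$. Equivalently, applying integrality to all of $f$ only yields $\liminf_n|R(f,t^n-1)|^{1/n}\ge 1$, not $\ge\mh(f)$; try $f$ equal to Lehmer's polynomial to see the shortfall concretely.

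You do mention a Baker-type bound earlier, and that is indeed the required tool (and what the paper uses); the ``elementary pigeonhole'' alternative you float gives only \emph{upper} bounds on $|\alpha^n-1|$ (how close $\alpha^n$ can come to $1$), never lower bounds.
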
 
In order to prove this, we need to show 
$\ds \lim_{n\in \N;\text{value}\neq 0}|\alpha^n-1|^{1/n}=1$ for a root $\alpha$ with $|\alpha|=1$. 
In \cite{GS1991}, the notion of ${\rm arg}\alpha$ was used in obtaining the estimation $|\alpha^n-1|>C\exp(-(\log n)^6)$ for a constant $C$.

\subsection{$p$-adic Mahler measure {\it $\mh_p$}}
For a prime number $p$, 
let $|\cdot|_p$ denote the $p$-adic norm normalized by $|p|_p=1/p$, and 
let $\Qp$ denote the $p$-adic numbers, that is, the $p$-adic completion of $\Q$. 
Let $\C_p=\wh{\ol{\Q}}_p$ denote the $p$-adic completion of an algebraic closure of $\Qp$, and fix an immersion $\ol{\Q}\inj \C_p$ of an algebraic closure of $\Q$. 
For a continuous function $F:\{z\in \C_p\mid |z|_p=1\}\to \C_p$ with no zero on $|z|_p=1$, 
the Shnirel'man integral is defined by
$$\int_{|z|_p=1}F(z)\frac{d z}{z}:=
\ds \lim_{n\in \N; {\rm gcd}(n,p)=1} \frac{1}{n}\sum_{\zeta^n=1} F(\zeta),$$
where the limit is taken with respect to the $p$-adic topology (\cite{Schnirelmann1938}). 

We \emph{imitate} this notion to define the $p$-adic Mahler measure as follows: 
\begin{dfn} \label{pMahler-def} 
Let $f:\C_p^*\to \C_p$ be the continuous map defined by a Laurent polynomial $0\neq f(t)\in \Cp[t^{\pm1}]$. 
If $f(t)$ has no root on $|z|_p=1$, then we define its \emph{$p$-adic Mahler measure} $\mh_p(f(t))$ by 
$$\log \mh_p(f(t))=\ds \lim_{n\in \N; {\rm gcd}(n,p)=1} \frac{1}{n}\sum_{\zeta^n=1} \log |f(\zeta)|_p,$$
where the limit is taken with respect to the Euclidean topology. 
\end{dfn}

Then we have an analogue of Jensen's formula: 
\begin{thm}[Jensen's formula] \label{pJensen} 
If $\ds f(t)=\sum_{0\leq i\leq l}a_it^{d-i}=a_0 t^{d-l}\prod_{0\leq i\leq l}(t-\alpha_i) \in \Cp[t^{\pm 1}]$ with $a_0a_l\neq 0$, then  
\begin{align*} 
\log \mh_p(f(t))
&=\log |a_l
|_p-\sum_{
0<|\alpha_i|_p<1} \log |\alpha_i|_p\\
&=\log |a_0
|_p+\sum_{
1<|\alpha_i|_p}\log |\alpha_i|_p.
\end{align*} 
\end{thm}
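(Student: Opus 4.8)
The plan is to reduce the claimed identity to a computation involving roots of unity and then pass to the Euclidean limit. First I would observe that the two displayed expressions are equal to each other for trivial reasons: since $f(t)=a_0 t^{d-l}\prod_i(t-\alpha_i)$ and also $f(t)=a_l t^{d-l}\prod_i(1-\alpha_i^{-1}t)\cdot(\text{a unit monomial})$, comparing leading and trailing coefficients gives $a_l = a_0\prod_i(-\alpha_i)$, hence $\log|a_l|_p = \log|a_0|_p + \sum_i \log|\alpha_i|_p$; splitting the sum according to $|\alpha_i|_p<1$, $=1$, $>1$ and using $\max\{|\alpha_i|_p,1\}$ turns one formula into the other. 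So it suffices to prove one of them, say $\log\mh_p(f(t)) = \log|a_0|_p + \sum_{|\alpha_i|_p>1}\log|\alpha_i|_p$.

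Next, since $|\cdot|_p$ is multiplicative and $\log$ turns products into sums, the limit defining $\log\mh_p$ is additive over factors of $f(t)$: $\log\mh_p(f g) = \log\mh_p(f)+\log\mh_p(g)$, provided each factor is nonvanishing on $|z|_p=1$. The monomial $t^{d-l}$ contributes $\frac1n\sum_{\zeta^n=1}\log|\zeta|_p = 0$ for every $n$. The constant $a_0$ contributes $\log|a_0|_p$. So the whole problem collapses to the single linear factor: I must show
\begin{align*}
\lim_{n;\,\gcd(n,p)=1}\frac1n\sum_{\zeta^n=1}\log|\zeta-\alpha|_p = \max\{\log|\alpha|_p,0\}
\end{align*}
for $\alpha\in\C_p$ with $|\alpha|_p\neq 1$. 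When $|\alpha|_p>1$, every term has $|\zeta-\alpha|_p=|\alpha|_p$ by the ultrametric inequality (since $|\zeta|_p=1<|\alpha|_p$), so the average is exactly $\log|\alpha|_p$ for all admissible $n$ — no limit needed. When $|\alpha|_p<1$, symmetrically $|\zeta-\alpha|_p=|\zeta|_p=1$, so each term is $0$. Thus the ultrametric inequality does all the work in the non-unit-circle case, and the Euclidean limit is in fact a constant sequence; this is why the hypothesis excluding roots on $|z|_p=1$ is exactly what makes the argument elementary.

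The one point requiring a little care — and the place I expect the only real friction — is handling the factor $a_0 t^{d-l}$ cleanly when $f$ is a genuine Laurent polynomial (so $d-l$ may be negative) and justifying that $\log\mh_p$ is well-defined, i.e. that the limit exists, before invoking additivity. I would address this by first establishing additivity and the monomial/constant evaluations at the level of the finite averages $\frac1n\sum_{\zeta^n=1}(\cdot)$ themselves, so that the sequence for $f$ is literally the termwise sum of the sequences for its irreducible-over-$\C_p$ (i.e. linear) factors plus the monomial and constant pieces; each of those sequences is eventually constant as shown above, so the sum converges and its value is the asserted one. Writing $\alpha_i = 0$ is not an issue since $a_0a_l\neq 0$ forces all $\alpha_i\neq 0$, and $|\alpha_i|_p=1$ is excluded by hypothesis, so the trichotomy above is exhaustive. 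Assembling these pieces gives both displayed equalities.
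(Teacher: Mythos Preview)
Your argument is correct and matches the paper's approach. The paper does not give a standalone proof of this theorem but folds it into the proof of the extension Theorem \ref{Mp-ext1} via Lemma \ref{keylemma}; the relevant case there ($|\alpha|_p\neq 1$) is exactly your computation: reduce multiplicatively to linear factors $t-\alpha_i$, then apply the Krull sharpening to get $|\zeta-\alpha|_p=\max\{|\alpha|_p,1\}$ for every root of unity $\zeta$, so the finite averages are constant and no genuine limit is required.
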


\begin{rem}
Besser and Deninger defined the \emph{purely} $p$-adic Mahler measure with use of the Shnirel'man integral and the $p$-adic log, which is different from ours, and proved an analogue of Jensen's formula for it (\cite{BesserDeninger1999}). 
We recall it in Section 4. 
\end{rem}

For our $\mh_p(f(t))$, we can remove the condition ${\rm gcd}(n,p)=1$ on the limit. 
In addition, we can extend this notion for $f(t)$ with roots on $|z|_p=1$. 
(We discuss another modification in Subsection \ref{sekibunro}.) Namely, 

\begin{thm} \label{Mp-ext1} 
Let $f(t) \in \Cp[t^{\pm 1}]$ ($\neq 0$) which may have roots on $|z|_p=1$. 
If we modify the definition of the $p$-adic Mahler measure (Definition \ref{pJensen}) as 
$$\log \mh_p(f(t))=\lim_{n\in \N} \frac{1}{n} \sum_{\zeta^n=1; f(\zeta)\neq 0} \log |f(\zeta)|_p,$$
then Jensen's formula (Theorem \ref{pJensen}) still holds. 
\end{thm}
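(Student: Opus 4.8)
The plan is to reduce Theorem \ref{Mp-ext1} to the already-established Jensen's formula (Theorem \ref{pJensen}) by factoring off the roots lying on the $p$-adic unit circle. Write $f(t)=a_0t^{d-l}\prod_{0\le i\le l}(t-\alpha_i)$ over $\Cp$ and split the index set into $I_{<}=\{i:|\alpha_i|_p<1\}$, $I_{=}=\{i:|\alpha_i|_p=1\}$, and $I_{>}=\{i:|\alpha_i|_p>1\}$. Set $g(t)=a_0t^{d-l}\prod_{i\in I_<\cup I_>}(t-\alpha_i)$ and $h(t)=\prod_{i\in I_=}(t-\alpha_i)$, so $f=gh$ and $g$ has no root on $|z|_p=1$; Theorem \ref{pJensen} applies to $g$ and already gives the right-hand side of the claimed formula (the roots on the unit circle contribute $0$ to both expressions). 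Since $\log|f(\zeta)|_p=\log|g(\zeta)|_p+\log|h(\zeta)|_p$ whenever $f(\zeta)\neq0$, it suffices to prove two things: first, that dropping the condition $\gcd(n,p)=1$ does not change the limit $\frac1n\sum_{\zeta^n=1,\,g(\zeta)\neq0}\log|g(\zeta)|_p$ (and the finitely many excluded terms where $g(\zeta)=0$ are harmless, since for a fixed root of unity $\zeta$ the quantity $\log|g(\zeta)|_p$ is a single finite number divided by $n\to\infty$); and second, that
\[
\lim_{n\in\N}\frac1n\sum_{\zeta^n=1;\,h(\zeta)\neq0}\log|h(\zeta)|_p=0.
\]

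For the first point, write $n=p^sm$ with $\gcd(m,p)=1$. The $n$-th roots of unity in $\Cp$ are exactly the $m$-th roots of unity, each repeated conceptually $p^s$ times in the sum only if one literally sums over solutions of $X^n=1$; but in characteristic zero $X^n-1$ has only $m$ distinct roots when $p\mid n$, so $\sum_{\zeta^n=1}$ should be read as a sum over the $\mu_m$ (the genuine solutions), and then $\frac1n\sum_{\zeta^n=1}\log|g(\zeta)|_p=\frac{1}{p^s m}\sum_{\zeta^m=1}\log|g(\zeta)|_p$. I would argue that as $n\to\infty$ through all integers, the subsequence with $s=0$ is cofinal enough and the factor $1/p^s$ only helps convergence; more precisely, for $i\in I_<\cup I_>$ one has $|\alpha_i|_p\neq1$, so $|\zeta-\alpha_i|_p=\max\{|\zeta|_p,|\alpha_i|_p\}=\max\{1,|\alpha_i|_p\}$ is \emph{constant} over all roots of unity $\zeta$, whence $\frac1n\sum_{\zeta^n=1}\log|g(\zeta)|_p$ is actually equal to $\log|a_0|_p+\sum_{i\in I_<\cup I_>}\log\max\{|\alpha_i|_p,1\}$ for \emph{every} $n$ (once $\zeta\neq0$, which is automatic), independent of $\gcd(n,p)$. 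This makes the first point essentially immediate and simultaneously re-derives Theorem \ref{pJensen} in this sharper ``exact, not just asymptotic'' form for $g$.

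The substance is therefore the second point: controlling $\sum_{\zeta^n=1}\log|\zeta-\alpha|_p$ for a single root $\alpha$ with $|\alpha|_p=1$. Here I would use the non-archimedean factorization identity $\prod_{\zeta^n=1}(X-\zeta)=X^n-1$ (sum over genuine roots, so really $\prod_{\zeta^m=1}(X-\zeta)=X^m-1$ when $n=p^sm$), evaluated at $X=\alpha$:
\[
\sum_{\zeta^n=1}\log|\zeta-\alpha|_p=\log|\alpha^m-1|_p
\]
(up to the harmless removal of the at most one term with $\zeta=\alpha$). So the task becomes $\lim_{n\to\infty}\frac1n\log|\alpha^{m}-1|_p=0$ where $m$ is the prime-to-$p$ part of $n$. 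Since $|\alpha|_p=1$, $\alpha$ lies in the valuation ring, its reduction $\bar\alpha$ is a nonzero element of the residue field; if $\bar\alpha$ is not a root of unity in the residue field then $|\alpha^m-1|_p=1$ for all $m$ and we are done, and otherwise $|\alpha^m-1|_p\le1$ always, with equality unless the order of $\bar\alpha$ divides $m$. The only way $\frac1n\log|\alpha^m-1|_p$ could fail to tend to $0$ is if $|\alpha^m-1|_p$ decays exponentially in $n$; but $|\alpha^m-1|_p\ge |p|_p^{v}=p^{-v}$ for a bound $v$ depending only on how $p$-adically close $\alpha^m$ can get to $1$, and a standard estimate (e.g. via the $p$-adic logarithm on $1+\m_{\Cp}$, or directly: if $|\alpha^m-1|_p=p^{-c}$ is small then raising to the $p$-th power multiplies the distance by exactly $|p|_p=p^{-1}$ in the range where $\log$ is an isometry, so $c$ grows only linearly in the $p$-adic valuation of $m$, hence at most logarithmically in $n$) shows $\log|\alpha^m-1|_p = O(\log n)$, giving $\frac1n\log|\alpha^m-1|_p\to0$. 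The main obstacle, and the part requiring genuine care rather than bookkeeping, is precisely this last estimate $-\log|\alpha^m-1|_p=O(\log n)$ uniformly in $m\mid n$ — it is the $p$-adic counterpart of the delicate $|\alpha^n-1|>C\exp(-(\log n)^6)$ bound invoked from \cite{GS1991} in the archimedean case, but here it is much softer because the $p$-adic filtration is discrete in each finite layer; I would phrase it cleanly using that $\alpha^{p^j}\to$ (a root of unity) forces the relevant valuations to grow like $j=O(\log n)$.
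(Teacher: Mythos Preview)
Your overall plan matches the paper's: reduce multiplicatively to single linear factors $t-\alpha$, dispose of the case $|\alpha|_p\neq 1$ by the Krull sharpening (where indeed $|\zeta-\alpha|_p=\max\{|\alpha|_p,1\}$ is literally constant over all roots of unity, so the average is exact for every $n$), and then show $\lim_n\frac1n\log|\alpha^n-1|_p=0$ when $|\alpha|_p=1$. That last limit is the paper's Lemma~\ref{lim=1}, proved there by taking the Teichm\"uller representative $\xi$ with $|\alpha-\xi|_p<1$ and establishing that $|\alpha^n-1|_p$ equals $|n|_p$ times an element of a fixed finite nonzero set.

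However, there is a genuine error in your setup that derails the second half. You assert that in $\Cp$ the equation $X^n=1$ has only $m$ solutions when $n=p^sm$ with $\gcd(m,p)=1$. This is false: $\Cp$ has characteristic \emph{zero} and is algebraically closed, so $X^n-1$ is separable with exactly $n$ distinct roots, including primitive $p^s$-th roots of unity (the paper uses these explicitly in Lemma~\ref{1-zeta}). You may be thinking of characteristic $p$, or of the fact that $\Zp$ itself contains only the $(p-1)$-st roots of unity; neither applies here. Consequently the identity you need is $\sum_{\zeta^n=1}\log|\zeta-\alpha|_p=\log|\alpha^n-1|_p$, not $\log|\alpha^m-1|_p$.

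This matters because your subsequent estimate is aimed at the wrong target. For $m$ prime to $p$ and $|\alpha|_p=1$ (not a root of unity), $|\alpha^m-1|_p$ is in fact bounded below by a positive constant independent of $m$ (by the paper's Lemma~\ref{alpham} it equals either $1$ or the fixed value $|\alpha^N-1|_p$ depending on whether $N\mid m$, where $N$ is the order of the Teichm\"uller $\xi$), so $\frac1n\log|\alpha^m-1|_p\to 0$ is trivially true but proves nothing. Your remark that ``$c$ grows only linearly in the $p$-adic valuation of $m$'' is vacuous since $v_p(m)=0$. The real work is bounding $|\alpha^n-1|_p$ from below when $p\mid n$; your heuristic about raising to the $p$-th power multiplying the distance to $1$ by $|p|_p$ is the right mechanism, and once pointed at $|\alpha^n-1|_p$ it can be made into a proof, yielding exactly $|\alpha^n-1|_p\asymp|n|_p$ as in the paper. (Incidentally, the case ``$\bar\alpha$ not a root of unity in the residue field'' is vacuous: every nonzero element of $\ol{\F}_p$ is a root of unity.)
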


We will prove Theorem \ref{Mp-ext1} in Subsection \ref{ssProofs}.
It is nontrivial if $f(t)$ has a root on $|z|_p=1$ which is not a root of unity. 
(We have $|1+p|_p=1$ for instance.) 
Since there is no analogous notions of $\arg \alpha$ and the geometric trigonometric functions for $p$-adic numbers, 
we cannot directly follow the proof in \cite{GS1991}. 
We need to give an alternative framework using properties of roots of unity and the strong triangle inequality.

By Theorem \ref{Mp-ext1}, we obtain a $p$-adic analogue of the asymptotic formula: 
\begin{prp} \label{asym-p}
If $0\neq f(t)\in \Cp[t]$, then 
$$\lim_{n\in \N;\text{value}\neq 0}|R(f(t),\nu_n(t))|_p^{1/n}=\mh_p(f(t)).$$
\end{prp}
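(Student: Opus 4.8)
The strategy is to reduce Proposition \ref{asym-p} to the $p$-adic Jensen formula of Theorem \ref{Mp-ext1} via an explicit factorization of the resultant. First I would write $f(t)=a_0 t^{d-l}\prod_{0\leq i\leq l}(t-\alpha_i)$ over $\ol{\Q}_p\subset \Cp$ (after clearing the monomial factor $t^{d-l}$, which is a unit on $|z|_p=1$ and contributes nothing to either side since $R(t,\nu_n(t))=\nu_n(0)=1$ and $\mh_p(t)=1$). Using the multiplicativity $R(fg,h)=R(f,h)R(g,h)$ and the formula $R(f(t),\nu_n(t))=a_0^{\deg\nu_n}\prod_{\zeta^n=1,\,\zeta\neq 1}f(\zeta)$, together with $\nu_n(t)=\prod_{\zeta^n=1,\zeta\neq1}(t-\zeta)$, I get
$$|R(f(t),\nu_n(t))|_p = |a_0|_p^{\,n-1}\prod_{i}\ \prod_{\zeta^n=1,\,\zeta\neq 1}|\alpha_i-\zeta|_p,$$
where I only keep those $n$ for which the resultant is nonzero (i.e.\ no $\alpha_i$ is a primitive $n$-th root of unity other than being handled by the $\zeta\neq1$ restriction; note $R(f,\nu_n)$ can also vanish if $\alpha_i=1$ for some $i$, in which case I pull off the factor $(t-1)$ first, as in the Euclidean case). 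Taking $1/n$-th roots, $|a_0|_p^{(n-1)/n}\to |a_0|_p$, so it remains to show for each fixed root $\alpha=\alpha_i$ that
$$\lim_{n;\,\text{value}\neq0}\Big(\prod_{\zeta^n=1,\,\zeta\neq1}|\alpha-\zeta|_p\Big)^{1/n}=\max\{|\alpha|_p,1\}.$$

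The three cases $|\alpha|_p>1$, $|\alpha|_p<1$, $|\alpha|_p=1$ are treated separately using the strong triangle inequality. If $|\alpha|_p>1$, then $|\alpha-\zeta|_p=|\alpha|_p$ for every root of unity $\zeta$ (since $|\zeta|_p=1<|\alpha|_p$), so the product is $|\alpha|_p^{\,n-1}$ and the limit is $|\alpha|_p$. If $|\alpha|_p<1$, then $|\alpha-\zeta|_p=|\zeta|_p=1$ for $\zeta$ a nontrivial root of unity, and if $\alpha\neq 1$ were on a later list we would already have removed it; the product is $1$, giving limit $1=\max\{|\alpha|_p,1\}$. The remaining case $|\alpha|_p=1$ is the real content and is exactly the statement that is invoked in the last sentence of the excerpt as ``nontrivial'': we must show $\big(\prod_{\zeta^n=1,\zeta\neq1}|\alpha-\zeta|_p\big)^{1/n}\to 1$, equivalently $\frac1n\sum_{\zeta^n=1,\zeta\neq1}\log|\alpha-\zeta|_p\to 0$. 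Since $\log\mh_p(t-\alpha)=0$ when $|\alpha|_p=1$ (by either definition of $\mh_p$), this is precisely Theorem \ref{Mp-ext1} applied to $f(t)=t-\alpha$, provided one checks that the resultant-nonzero subsequence of $\N$ coincides (up to finitely many terms) with the subsequence $\{n: f(\zeta)\neq 0$ for the relevant $\zeta\}$ appearing in Theorem \ref{Mp-ext1}; indeed $R(t-\alpha,\nu_n)=\nu_n(\alpha)=\prod_{\zeta^n=1,\zeta\neq1}(\alpha-\zeta)=0$ iff $\alpha$ is an $n$-th root of unity other than $1$, and then $f(\zeta)=0$ for $\zeta=\alpha$.

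Assembling these, $\frac1n\log|R(f,\nu_n)|_p = \frac{n-1}{n}\log|a_0|_p + \sum_i \frac1n\sum_{\zeta^n=1,\zeta\neq1}\log|\alpha_i-\zeta|_p \to \log|a_0|_p + \sum_{|\alpha_i|_p>1}\log|\alpha_i|_p = \log\mh_p(f(t))$ by Theorem \ref{pJensen} (in its extended form, Theorem \ref{Mp-ext1}). The only delicate bookkeeping is matching the ``$\text{value}\neq 0$'' index set on the left with the index sets in Theorems \ref{pJensen}/\ref{Mp-ext1} and handling roots $\alpha_i$ that are themselves roots of unity (including $\alpha_i=1$), which I would dispatch by the same ``pull off the offending cyclotomic factor'' device used for the Euclidean Proposition \ref{asym}. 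The main obstacle is genuinely the case $|\alpha|_p=1$ with $\alpha$ not a root of unity, but that is exactly what Theorem \ref{Mp-ext1} is designed to supply, so once that theorem is in hand the present proposition is a short deduction.
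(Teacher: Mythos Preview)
Your proposal is correct and follows essentially the same route as the paper: the paper records Proposition \ref{asym-p} simply as an immediate consequence of Theorem \ref{Mp-ext1}, and your factorization into linear factors with the three cases $|\alpha|_p\lessgtr 1$ and $|\alpha|_p=1$ is exactly the reduction that underlies the proof of Theorem \ref{Mp-ext1} itself (via Lemma \ref{keylemma}). One minor clarification: in Theorem \ref{Mp-ext1} the condition ``$f(\zeta)\neq 0$'' restricts the \emph{summands} $\zeta$ while the limit runs over \emph{all} $n$, so there is no subsequence of $n$'s to be matched there; since the full limit exists, it automatically holds along the subsequence $\{n: R(f,\nu_n)\neq 0\}$, and the residual discrepancy between excluding $\zeta=1$ (your resultant side) and excluding $\zeta$ with $f(\zeta)=0$ (the Theorem \ref{Mp-ext1} side) contributes a bounded factor whose $1/n$-th power tends to $1$.
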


We note that the $p$-adic Mahler measure of a polynomial can be calculated easily from its coefficients (e.g. the argument in the end of \cite{LindWard1988}):  

\begin{prp} \label{Mp-cal}
If $\ds f(t)=\sum_{0\leq i\leq l} a_it^{d-i} \in \Cp[t^{\pm1}]$ with $d\in \Z$ and $l\in \N$, then $\mh_p(f(t))={\rm max}\{|a_i|_p\}_i$ holds. 
In other words, the $p$-adic Mahler measure and the Gauss norm of $f(t)$ coincide.  
\end{prp}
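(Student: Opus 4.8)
The plan is to reduce the statement to the two already-established results: Jensen's formula (Theorem \ref{pJensen}, equivalently its extension Theorem \ref{Mp-ext1}) and the elementary structure of the Newton polygon of a polynomial over $\Cp$. Write $f(t)=a_0t^{d-l}\prod_{0\leq i\leq l}(t-\alpha_i)$ with $a_0\neq 0$, and temporarily assume $a_l\neq 0$, i.e. all $\alpha_i\neq 0$ (the general case follows by factoring out the power of $t$, which changes neither $\mh_p$ nor the Gauss norm). First I would recall the relation between the coefficients $a_i$ and the valuations of the roots via the Newton polygon: if among the $\alpha_i$ there are $r_v$ of them with $|\alpha_i|_p = p^{v}$ (for the finitely many values $v\in\Q$ that occur), then the coefficient $a_j$ of $f$, being (up to sign) $a_0$ times the $j$-th elementary symmetric function of the $\alpha_i$, satisfies $|a_j|_p \leq |a_0|_p\cdot \max\{\,\prod_{i\in S}|\alpha_i|_p : |S|=j\,\}$ by the strong triangle inequality, with equality for the specific $j$ at the breakpoints of the Newton polygon (where the maximizing product is attained by a unique subset and so there is no cancellation).

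Given this, the computation is as follows. By the strong triangle inequality $\max_j |a_j|_p = |a_0|_p \cdot \max_{0\leq j\leq l}\prod_{i\in S_j}|\alpha_i|_p$, where $S_j$ is a size-$j$ subset of the index set maximizing the product; choosing $S_j$ to consist of the $j$ roots of largest absolute value, the maximum over $j$ is clearly achieved by taking $S$ to be exactly the set of roots with $|\alpha_i|_p>1$ (adding a root of absolute value $>1$ increases the product, adding one of absolute value $\leq 1$ does not increase it). Hence $\max_j|a_j|_p = |a_0|_p\prod_{|\alpha_i|_p>1}|\alpha_i|_p$. On the other hand, Jensen's formula gives exactly $\mh_p(f(t)) = |a_0|_p\prod_{1<|\alpha_i|_p}|\alpha_i|_p$. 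Comparing the two yields $\mh_p(f(t)) = \max_j|a_j|_p$, which is the Gauss norm.

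The one point that needs care — and which I expect to be the only real obstacle — is justifying that the maximum $\max_j|a_j|_p$ is actually \emph{attained} (not merely bounded) at the correct index, i.e. that the strong triangle inequality is an equality there. This is the statement that at a breakpoint of the Newton polygon the maximizing monomial in the symmetric function is unique, hence survives without cancellation; it requires grouping the roots by absolute value and checking that the product $\prod_{|\alpha_i|_p>1}|\alpha_i|_p$ is strictly larger than any product over a set of different cardinality, and at least as large as any product over a set of the same cardinality, with strict inequality forced at the cardinality $j_0 = \#\{i : |\alpha_i|_p>1\}$. Since all factors $>1$ are $\geq p^{1/e}>1$ for the ramification index $e$, and all factors $\leq 1$ are $\leq 1$, this separation is automatic, so the argument closes. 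An alternative, perhaps cleaner route is to bypass Jensen entirely: prove $\mh_p(f) = \max_j|a_j|_p$ directly by noting that $|f(\zeta)|_p \leq \max_j|a_j|_p$ for every root of unity $\zeta$ (so $\mh_p(f)\leq \max_j|a_j|_p$), and then using Theorem \ref{Mp-ext1} together with multiplicativity of $\mh_p$ under the factorization into linear factors — but this essentially re-derives Jensen, so the factorization-plus-Jensen approach above is the most economical.
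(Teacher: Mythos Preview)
Your proposal is correct and follows essentially the same route as the paper. The paper's proof is simply the terse version of your ``one point that needs care'' paragraph: it orders the roots so that $|\alpha_1|_p\ge\cdots\ge|\alpha_l|_p$, sets $r=\#\{i:|\alpha_i|_p>1\}$, and observes that $|a_r/a_0|_p=|\alpha_1\cdots\alpha_r+\text{(smaller terms)}|_p=|\alpha_1\cdots\alpha_r|_p$; the comparison with Jensen's formula and the upper bound $\max_j|a_j|_p\le|a_0|_p\prod_{|\alpha_i|_p>1}|\alpha_i|_p$ are left implicit. One minor remark: your appeal to a ramification index $e$ (``all factors $>1$ are $\ge p^{1/e}$'') is both ill-posed in $\Cp$ and unnecessary---any competing subset $S'$ of size $r$ must drop a root of norm $>1$ and pick up one of norm $\le1$, so $\prod_{i\in S'}|\alpha_i|_p<\prod_{i\le r}|\alpha_i|_p$ strictly, with no discreteness needed.
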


\begin{proof} 
Let the roots of $f(t)$ be indexed by natural numbers as $\alpha_i$ in a descending order with respect to the $p$-adic norm. 
If $r$ is the largest $i$ with $|\alpha_i|_p>1$, then we have 
$|a_r/a_0|_p=|\alpha_1\cdots \alpha_r +\text{smaller terms}|_p=|\alpha_1\cdots \alpha_r|_p$. \end{proof}

\subsection{Proofs} \label{ssProofs}
To begin with, we recall properties of the $p$-adic norm $|\cdot|_p$. 
It defines an ultra-metric, that is, it satisfies the strong triangle inequality 
$|x-y|_p\leq {\rm max}\{|x|_p,|y|_p\}$ for every $x,y \in \C_p$. 
As a general property of ultra-metric spaces, we have the Krull sharpening: 
If $|x|_p\neq |y|_p$, then $|x-y|_p={\rm max}\{|x|_p,|y|_p\}$ holds. \\ 

Next, we recall several properties of numbers on the $p$-adic unit circle. 

\begin{lmm} \label{alpham}
If $\alpha \in \C_p$ satisfies $|\alpha|_p=1$ and $|\alpha-1|_p<1$, and if $m$ is prime to $p$, then $|\alpha^m-1|_p=|\alpha-1|_p$ holds. 
\end{lmm}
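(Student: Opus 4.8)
The plan is to write $\alpha = 1 + u$ with $|u|_p < 1$ and expand $\alpha^m - 1$ by the binomial theorem, then extract the dominant term. We have
$$\alpha^m - 1 = (1+u)^m - 1 = \sum_{1 \le k \le m} \binom{m}{k} u^k = mu + \binom{m}{2} u^2 + \cdots + u^m.$$
The first term is $mu$, and since $m$ is prime to $p$ we have $|m|_p = 1$, so $|mu|_p = |u|_p = |\alpha - 1|_p$. Every later term $\binom{m}{k} u^k$ with $k \ge 2$ satisfies $\left|\binom{m}{k} u^k\right|_p \le |u|_p^k \le |u|_p^2 < |u|_p$, because $|u|_p < 1$ forces $|u|_p^k$ to strictly decrease in $k$, and binomial coefficients are integers hence have $p$-adic norm at most $1$. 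So the term $mu$ strictly dominates all the others in $p$-adic norm.

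Now I would invoke the Krull sharpening recalled just above: since $|mu|_p = |u|_p$ is strictly greater than $\left|\sum_{2 \le k \le m} \binom{m}{k} u^k\right|_p$ (the latter being at most $\max_{k \ge 2}\left|\binom{m}{k}u^k\right|_p \le |u|_p^2 < |u|_p$ by the strong triangle inequality), the norm of the sum equals the norm of the dominant term:
$$|\alpha^m - 1|_p = \left| mu + \sum_{2 \le k \le m} \binom{m}{k} u^k \right|_p = |mu|_p = |u|_p = |\alpha - 1|_p.$$
This is exactly the claimed equality.

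There is essentially no serious obstacle here; the only point requiring a moment's care is confirming that the tail $\sum_{k\ge 2}\binom{m}{k}u^k$ has $p$-adic norm strictly less than $|u|_p$ rather than merely $\le |u|_p$, which is where the hypothesis $|u|_p < 1$ (equivalently $|\alpha-1|_p < 1$, strict) is used — if $|\alpha - 1|_p = 1$ the argument would fail, and indeed the statement would be false in general. Everything else is the standard ultrametric bookkeeping already set up in the paper.
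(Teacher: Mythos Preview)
Your proof is correct. The paper's argument is organized a bit differently: it factors $\alpha^m-1=(\alpha-1)(1+\alpha+\cdots+\alpha^{m-1})$ and then shows the cofactor has norm $1$ by rewriting it as $m+\sum_{i=1}^{m-1}(\alpha^i-1)$, where the sum has norm $<1$ and $|m|_p=1$. Your binomial expansion $\alpha^m-1=mu+\sum_{k\ge 2}\binom{m}{k}u^k$ is essentially the same computation viewed through the substitution $\alpha=1+u$; both isolate the contribution of $m$ and use the Krull sharpening to kill the tail. Neither approach buys anything the other does not --- they are two equally short phrasings of the same ultrametric estimate.
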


\begin{proof}
Indeed, we have 
$|\alpha^m-1|_p=|\alpha-1|_p|\alpha^{m-1}+\cdots+1|_p=|\alpha-1|_p|(\alpha^{m-1}-1)+\cdots+(\alpha-1)+m|_p=|\alpha-1|_p |(\alpha-1)(\alpha^{m-2}+\cdots +1)+m|_p$. 
Since 
$|\alpha-1|_p<1$, $|\alpha^{m-2}+\cdots +1|_p\leq {\rm max}\{|\alpha^i|_p\}_i=1$, and $|m|_p=1$, 
we have $|(\alpha-1)(\alpha^{m-2}+\cdots +1)+m|_p=1$. 
\end{proof} 

\begin{lmm} \label{1-zeta}
Let $\zeta$ be a primitive $n$-th root of unity and suppose $n=mp^r$ with $p \not| m$. 
If $n=1$, then $|1-\zeta|_p=0$. 
If $m=1$ and $r>0$, that is, if $\zeta$ is a primitive $p$-power-th root of unity with $\zeta\neq 1$, then $|1-\zeta|_p=p^{-1/(p-1)p^{r-1}}$ holds. 
If otherwise, then $|1-\zeta|_p=1$. 
\end{lmm}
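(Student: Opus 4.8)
The plan is to dispatch the three cases of the statement separately. When $n=1$ there is nothing to prove, since then $\zeta=1$ and $|1-\zeta|_p=0$. The two substantive cases are the prime-power case ($m=1$, $r>0$) and the prime-to-$p$ case ($r=0$, $m>1$); the general case $m>1$ will then follow from these by factoring a primitive $n$-th root of unity as a product of a primitive $m$-th root and a primitive $p^r$-th root of unity and invoking the strong triangle inequality.

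For the prime-power case, let $\zeta$ be a primitive $p^r$-th root of unity. Reducing the relation $\zeta^{p^r}=1$ modulo the maximal ideal $\m$ of the valuation ring of $\Cp$, whose residue field has characteristic $p$, gives $\bar\zeta^{p^r}=1$, hence $(\bar\zeta-1)^{p^r}=0$ and $\bar\zeta=1$; thus $|\zeta-1|_p<1$. By Lemma \ref{alpham}, $|\zeta^j-1|_p=|\zeta-1|_p$ for every $j$ coprime to $p$, and since every primitive $p^r$-th root of unity is of the form $\zeta^j$ with $p\nmid j$, all $\varphi(p^r)=(p-1)p^{r-1}$ factors in the cyclotomic identity $\prod_{\zeta'}(1-\zeta')=\Phi_{p^r}(1)=p$ have the same $p$-adic norm. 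Taking $p$-adic norms yields $|1-\zeta|_p^{(p-1)p^{r-1}}=|p|_p=p^{-1}$, that is, $|1-\zeta|_p=p^{-1/((p-1)p^{r-1})}$.

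For the prime-to-$p$ case, $\zeta$ is a primitive $m$-th root of unity with $p\nmid m$, so $\zeta$ is a unit and $|1-\zeta|_p\le 1$. Since $\gcd(p,m)=1$, the polynomial $X^m-1$ is coprime to its derivative $mX^{m-1}$ in $\F_p[X]$ and hence separable over $\F_p$, so the $m$-th roots of unity in $\Cp$ have pairwise distinct images modulo $\m$; in particular $\bar\zeta\neq\bar 1$, so $|1-\zeta|_p=1$. Finally, in the general case write $\zeta=\xi\eta$ with $\xi$ a primitive $m$-th root and $\eta$ a primitive $p^r$-th root of unity (taking $\eta=1$ if $r=0$), and expand $1-\zeta=(1-\xi)+\xi(1-\eta)$. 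The two cases already treated give $|1-\xi|_p=1$ and $|\xi(1-\eta)|_p=|1-\eta|_p<1$; since these norms are unequal, the Krull sharpening of the ultrametric inequality forces $|1-\zeta|_p=1$.

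The step needing the most care is the claim that the $\varphi(p^r)$ factors $1-\zeta'$ occurring in $\Phi_{p^r}(1)=p$ all have the same norm; I would base this on Lemma \ref{alpham} as above, after first checking $|\zeta-1|_p<1$ by reduction modulo $\m$, rather than on any Galois-invariance property of $|\cdot|_p$, which is less convenient to state at this point.
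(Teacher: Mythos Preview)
Your proof is correct. Both your argument and the paper's hinge on Lemma~\ref{alpham} to show that all primitive $p^r$-th roots of unity share the same value of $|1-\zeta|_p$, but the surrounding logic differs. The paper proceeds from the single product identity $\prod_{\zeta^n=1,\,\zeta\neq 1}|1-\zeta|_p=|\nu_n(1)|_p=|n|_p=p^{-r}$ together with $|1-\zeta|_p\leq 1$, and leaves to the reader both the bootstrap that some (hence every) primitive $p^r$-th root satisfies $|\zeta-1|_p<1$ and the deduction that the remaining factors (those with $m>1$) must all equal~$1$. You instead establish $|\zeta-1|_p<1$ directly by reduction modulo $\m_p$, use the finer product $\Phi_{p^r}(1)=p$ to extract the exact value, and treat the $m>1$ case via separability of $X^m-1$ over $\F_p$ and the Krull sharpening applied to the decomposition $\zeta=\xi\eta$. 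Your route is more explicit and self-contained, at the cost of a case split; the paper's route is more uniform but demands that the reader supply the missing steps.
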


\begin{proof}
We have $\ds \prod_{\zeta^n=1,\zeta\neq1} |1-\zeta|_p=|R(\nu_n(t),t-1)|_p=|\nu_n(1)|_p=|n|_p=|p^r|_p=p^{-r}$ and 
$|1-\zeta|_p\leq 1$. 
Since the set of primitive $p^r$-th roots of unity is closed by non-$p$ powers, the assertion follows from Lemma \ref{alpham}.
\end{proof}

\begin{lmm} \label{nearest}
If $\alpha\in \C_p$ satisfies $|\alpha|_p=1$, then there exist a unique $N$ with $p\not | N$ and a unique primitive $N$-th root $\xi$ of unity satisfying $|\alpha-\xi|_p<1$. 
\end{lmm}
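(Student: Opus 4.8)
The statement is a $p$-adic "nearest root of unity" result for $\alpha$ on the unit circle. The plan is to use the structure of the residue field together with a Hensel-type lifting argument. First I would reduce $\alpha$ modulo the maximal ideal $\m$ of the valuation ring $\O_{\Cp}=\{x\in\Cp\mid |x|_p\le 1\}$: the residue field $k=\O_{\Cp}/\m$ is an algebraic closure of $\F_p$, so $\ol{\alpha}\in k^\times$ lies in $\F_{p^s}^\times$ for some $s$, hence satisfies $\ol{\alpha}^N=1$ for $N=p^s-1$, which is prime to $p$. This produces the candidate $N$: it is the multiplicative order of $\ol{\alpha}$ in $k^\times$.

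\textbf{Existence of $\xi$.} Having fixed this $N$ with $p\nmid N$, I would lift $\ol\alpha$ to a genuine $N$-th root of unity. The polynomial $t^N-1$ is separable over $\O_{\Cp}$ (its derivative $Nt^{N-1}$ is a unit times a power of $t$, and $t$ is a unit on $|z|_p=1$), so by Hensel's lemma the reduction $\ol\alpha$, which is a simple root of $t^N-1$ over $k$, lifts uniquely to a root $\xi\in\O_{\Cp}$ of $t^N-1$ with $\ol\xi=\ol\alpha$; then $|\alpha-\xi|_p<1$ since $\alpha\equiv\xi\pmod{\m}$, and $\xi$ is a primitive $N$-th root of unity because its reduction $\ol\alpha$ has exact order $N$ in $k^\times$ and reduction is injective on $\mu_N$ (again since $p\nmid N$, so distinct $N$-th roots of unity stay distinct mod $\m$).

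\textbf{Uniqueness.} Suppose $\xi'$ is a primitive $N'$-th root of unity with $p\nmid N'$ and $|\alpha-\xi'|_p<1$. Then $|\xi-\xi'|_p\le\max\{|\alpha-\xi|_p,|\alpha-\xi'|_p\}<1$ by the strong triangle inequality, so $\ol\xi=\ol{\xi'}$ in $k$; since reduction is injective on the group $\mu_{NN'}$ of roots of unity of order prime to $p$ (an argument via Lemma \ref{1-zeta}: if $\eta\ne 1$ has order prime to $p$ then $|1-\eta|_p=1$, so $\ol\eta\ne 1$), we get $\xi=\xi'$, whence also $N'=N$. This also pins down $N$ as claimed.

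\textbf{Main obstacle.} The only real subtlety is justifying that reduction modulo $\m$ restricts to an isomorphism from the group of prime-to-$p$ roots of unity in $\Cp$ onto $k^\times = \ol{\F}_p^{\,\times}$ — injectivity from Lemma \ref{1-zeta} and surjectivity from Hensel's lemma — and that $\Cp$ indeed contains all such roots of unity (it does, as a complete algebraically closed field). Everything else is a short application of the strong triangle inequality. I would present the residue-field reduction as the conceptual core and relegate the Hensel lifting to a sentence, since it is standard; alternatively, one can avoid invoking Hensel directly by noting $|\alpha^N - 1|_p < 1$ for the chosen $N$ and then appealing to Lemma \ref{1-zeta} or a direct factorization argument, but the residue-field formulation is cleanest.
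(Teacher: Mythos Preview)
Your proposal is correct and follows essentially the same approach as the paper: both arguments reduce $\alpha$ modulo the maximal ideal of $\O_{\Cp}$, use that the residue field is $\ol{\F}_p$, and invoke the bijection between prime-to-$p$ roots of unity in $\C_p$ and $\ol{\F}_p^{\times}$. The paper states this bijection directly and identifies $\xi$ as the Teichm\"uller lift of $\alpha\bmod\m_p$, whereas you unpack the same content via Hensel's lemma and an explicit appeal to Lemma~\ref{1-zeta} for injectivity; the substance is the same.
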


\begin{proof}
We consider the valuation ring $\mca{O}_p=\{z\in \C_p\mid |z|_p\leq 1\}$ and its maximal ideal $\m_p=\cup_{n\in \N_{>0}} (p^{1/n})$. 
Since $|\alpha|_p=1$, we have $\alpha\in \mca{O}_p$. 
We consider $\mod \m_p: \mca{O}_p\surj \ol{\F}_p$. 
Elements of $\ol{\F}_p^{\times}$ are non-$p$-power-th roots of unity, and the restriction map 
$\mod \m_p: \{z\in \C_p\mid \text{non-$p$-power-th roots of unity}\} \subset \mca{O}_p \to \ol{\F}_p^{\times}$ is a bijection. 

Thus there exists a unique non-$p$-power-th root of unity $\xi\in \C_p$ satisfying $\xi \mod \m_p=\alpha \mod \m_p$, 
namely, $|\alpha-\xi|_p<1$. 
(This $\xi$ is the image of $\alpha \mod \m_p$ under the unique Teichm\"uller lift $\omega:\ol{\F}_p\inj \C_p$.) 
\end{proof}

\begin{lmm} \label{lim=1}
If $\alpha\in \C_p$ satisfies $|\alpha|_p=1$ and $\alpha\neq 1$, then $\ds \lim_{n\in \N;\alpha^n\neq1}|\alpha^n-1|_p^{1/n}=1$ holds. 
\end{lmm}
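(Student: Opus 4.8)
The plan is to sandwich $|\alpha^n-1|_p^{1/n}$ between $1$ and a sequence tending to $1$. The upper bound is free: $|\alpha|_p=1$ and the strong triangle inequality give $|\alpha^n-1|_p\le 1$, hence $|\alpha^n-1|_p^{1/n}\le 1$ for every $n$. So the whole content is a lower bound of the shape $|\alpha^n-1|_p\ge c\,n^{-C}$ for $\alpha^n\neq 1$, with $c>0$ and $C\ge 0$ independent of $n$: once this is available, $(c\,n^{-C})^{1/n}\to 1$ and the squeeze theorem closes the argument. I would split into two cases according to whether $\alpha$ is a root of unity.

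If $\alpha$ is a root of unity of order $n_0$, then every power $\alpha^n\neq 1$ is a primitive $d$-th root of unity for some $d\mid n_0$ with $d>1$, so by Lemma \ref{1-zeta} the quantity $|\alpha^n-1|_p$ is either $1$ or of the form $p^{-1/(p-1)p^{s-1}}$ with $s\ge 1$; in either case it is at least the fixed positive constant $p^{-1/(p-1)}$ (the minimum of $|1-\zeta|_p$ over all nontrivial roots of unity $\zeta$), and the claim follows immediately.

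If $\alpha$ is not a root of unity, then $\alpha^n\neq 1$ for all $n$ and the constraint in the limit is vacuous. Here I would use Lemma \ref{nearest} to fix the unique prime-to-$p$ integer $N$ and primitive $N$-th root of unity $\xi$ with $\rho_0:=|\alpha-\xi|_p<1$, and set $\beta:=\alpha\xi^{-1}=1+w$, so that $|w|_p=\rho_0<1$ and $w\neq 0$. Writing $n=p^rm$ with $p\nmid m$ (so $r=v_p(n)$) and applying Lemma \ref{alpham} to $\beta^{p^r}=1+w_r$, whose norm is $\rho_r:=|(1+w)^{p^r}-1|_p$ (automatically $<1$, since every term of the binomial expansion of $(1+w)^{p^r}-1$ has $p$-norm at most $\rho_0$), one gets $|\beta^n-1|_p=\rho_r$. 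Now $\alpha^n-\xi^n=\xi^n(\beta^n-1)$, so $|\alpha^n-\xi^n|_p=\rho_r<1$; if $N\nmid n$ then $\xi^n$ is a nontrivial root of unity of order prime to $p$, whence $|\xi^n-1|_p=1$ by Lemma \ref{1-zeta} and the Krull sharpening forces $|\alpha^n-1|_p=1$, while if $N\mid n$ then $\xi^n=1$ and $|\alpha^n-1|_p=\rho_r$. It thus remains only to bound $\rho_{v_p(n)}$ from below as a function of $n$, and this — controlling the decay of $|(1+w)^{p^r}-1|_p$ — is the step I expect to be the main obstacle.

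For it, expand $(1+w_r)^p-1=\sum_{i=1}^{p}\binom{p}{i}w_r^i$ and use $v_p\binom{p}{i}=1$ for $1\le i\le p-1$: the strong triangle inequality gives $\rho_{r+1}\le\max\{p^{-1}\rho_r,\ \rho_r^p\}$, which is $<\rho_r$, so $\rho_r$ is strictly decreasing; and once $\rho_r<p^{-1/(p-1)}$ the term $pw_r$ strictly dominates all the others, so $\rho_{r+1}=p^{-1}\rho_r$ exactly. Since $\beta$ is not a root of unity we have $\rho_r>0$ for all $r$, and $\rho_r$ cannot stay $\ge p^{-1/(p-1)}$ forever: if it did, strict monotonicity would give $\rho_r>p^{-1/(p-1)}$ for every $r$, whence $\rho_{r+1}=\rho_r^p$ and $\rho_r=\rho_0^{p^r}\to 0$, absurd. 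So there is a first index $r_1$ with $\rho_{r_1}<p^{-1/(p-1)}$, beyond which the sequence is geometric with ratio $p^{-1}$; combined with monotonicity for $r<r_1$ this yields $\rho_r\ge\rho_{r_1}\,p^{-r}$ for all $r$. Finally $p^{v_p(n)}\mid n$ gives $p^{-v_p(n)}\ge 1/n$, so $|\alpha^n-1|_p\ge\rho_{r_1}/n$ when $N\mid n$ and $=1$ otherwise, and the squeeze argument concludes. The one genuinely delicate point is the at-most-one index where $\rho_r=p^{-1/(p-1)}$ exactly: there the $i=1$ and $i=p$ terms have equal $p$-norm and may partly cancel, so $\rho_{r+1}$ can drop abruptly below $p^{-1}\rho_r$; but this only shifts the geometric tail by a bounded factor, harmlessly absorbed into the constant.
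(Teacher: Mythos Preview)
Your proof is correct, but it takes a genuinely different route from the paper's. Both proofs begin with Lemma~\ref{nearest} to produce the Teichm\"uller root $\xi$ of order $N$ prime to $p$, and both observe that $|\alpha^n-1|_p=1$ whenever $N\nmid n$. The divergence is in the case $N\mid n$.

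The paper does not pass to $\beta=\alpha\xi^{-1}$ at all. Instead it factors $\alpha^n-1=\prod_{\zeta^n=1}(\alpha-\zeta)$ and, for each $n$-th root of unity $\zeta$, writes $\alpha-\zeta=(\alpha-\xi)+(\xi-\zeta)$. Since $|\alpha-\xi|_p$ is a fixed number $<1$ while (after the substitution $\zeta\mapsto\xi\zeta$) the multiset $\{|\xi-\zeta|_p\}$ equals $\{|1-\zeta|_p\}$, Lemma~\ref{1-zeta} shows that all but a bounded-in-$n$ number of factors satisfy $|\alpha-\zeta|_p=|\xi-\zeta|_p$, and the full product $\prod_{\zeta\neq\xi}|\xi-\zeta|_p$ equals $|n|_p$. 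Hence $|\alpha^n-1|_p=\bullet\cdot|n|_p$ with $\bullet$ ranging over a fixed finite set of nonzero values, and $|n|_p^{1/n}\to1$ finishes.

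Your argument instead analyzes the iteration $x\mapsto x^p$ on the principal units $1+\m_p$: you track $\rho_r=|\beta^{p^r}-1|_p$ through the binomial recursion, show it enters the regime $\rho_r<p^{-1/(p-1)}$ after finitely many steps, and from then on decays exactly by $p^{-1}$ per step, giving $\rho_{v_p(n)}\ge c\,p^{-v_p(n)}\ge c/n$. This is essentially the classical structure theorem for higher unit groups. It is more self-contained (no product over roots of unity, no bookkeeping of exceptional $\zeta$) and makes the lower bound $c/n$ explicit; the paper's approach, on the other hand, is thematically aligned with the resultant/Mahler-measure framework of the article and yields the slightly sharper statement that $|\alpha^n-1|_p/|n|_p$ takes only finitely many values. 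Your separate treatment of the root-of-unity case is also cleaner than the paper's, which handles both cases at once.
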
 

\begin{proof}
By Lemma \ref{nearest}, we have a unique non-$p$-power-th root $\xi$ of unity with $|\alpha-\xi|_p<1$. 
Suppose that $\xi$ is a primitive $N$-th root of unity. Then $N\neq 1$. 
Suppose $\alpha^n\neq1$. Note that 
$|\alpha^n-1|_p=\prod_{\zeta^n=1}|\alpha-\zeta|_p=\prod_{\zeta^n=1}|(\alpha-\xi)+(\xi-\zeta)|_p$ holds. 

If $N\not | n$ and $\zeta^n=1$, then $|\xi-\zeta|_p=|1-\zeta/\xi|_p=1$. Hence for any $n \in \N$ with $\alpha^n \neq 1$, we have $|\alpha^n-1|=1$. 
If $N|n$, then 
we have $\{|\xi-\zeta|_p \mid \zeta^n=1\}=\{|1-\zeta|_p \mid \zeta^n=1\}$. 
The product of all the non-zero elements of this set is $|n|_p$. 
By Lemma \ref{1-zeta}, 
if $\zeta$ runs through primitive $p^r$-th roots of unity for $r\in \N$, then $|1-\zeta|_p$ increase with respect to $r$ and converges to 1 as $r\to \infty$. 
If $\zeta$ is another root of unity with $\zeta\neq 1$, then 
$|1-\zeta|_p=1$. 
Hence we have $|\alpha-\zeta|_p=|\xi-\zeta|_p$ for almost all the roots $\zeta$ of unity. 

Thus we have $|\alpha^n-1|_p=\prod_{\zeta^n=1}|\alpha-\zeta|_p=\bullet\times |n|_p$, where $\bullet$ lies in a finite set with $\bullet\neq 0$. Since $\ds \lim_{n\in \N}|n|_p^{1/n}=1$, 
we obtain $\ds \lim_{n\in \N;\alpha^n\neq 1}|\alpha^n-1|_p^{1/n}=1$.
\end{proof}

The following lemma plays a key roll in the proof of Theorem \ref{Mp-ext1}: 

\begin{lmm} \label{keylemma} 
For any element $\alpha \in \Cp$, the equality 
$\ds \lim_{n\in \N} \prod_{\zeta^n=1; f(\zeta)\neq 0} |\zeta-\alpha|_p^{1/n}={\rm max}\{|\alpha|_p,1\}$ holds, where $\zeta$ runs through roots of unity in $\C_p$. 
\end{lmm}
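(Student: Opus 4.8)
The plan is to split the analysis according to the position of $\alpha$ relative to the $p$-adic unit circle, since the behavior of $|\zeta-\alpha|_p$ as $\zeta$ ranges over $n$-th roots of unity is governed by whether $|\alpha|_p>1$, $|\alpha|_p<1$, or $|\alpha|_p=1$. If $|\alpha|_p>1$, then for every root of unity $\zeta$ we have $|\zeta|_p=1<|\alpha|_p$, so the Krull sharpening (the ultrametric inequality in the non-equal case, recalled at the start of Subsection~\ref{ssProofs}) gives $|\zeta-\alpha|_p=|\alpha|_p$ for all $\zeta$; hence the product over the (at most $n$) terms is exactly $|\alpha|_p^{(\#\{\zeta^n=1\})/n}$, which tends to $|\alpha|_p=\max\{|\alpha|_p,1\}$. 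If $|\alpha|_p<1$, the same sharpening gives $|\zeta-\alpha|_p=|\zeta|_p=1$ for every $\zeta$, so the product is $1=\max\{|\alpha|_p,1\}$ on the nose. Thus the only real content is the case $|\alpha|_p=1$.

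For $|\alpha|_p=1$ I would first dispose of the subcase $\alpha=1$: then the terms with $\zeta^n=1,\ \zeta\neq 1$ contribute $\prod_{\zeta^n=1,\zeta\neq1}|1-\zeta|_p=|n|_p$ by Lemma~\ref{1-zeta} (via the resultant computation $|R(\nu_n(t),t-1)|_p=|n|_p$), while the excluded term $\zeta=1$ is precisely the one where $f(\zeta)=\zeta-\alpha=0$, so the condition $f(\zeta)\neq 0$ automatically removes it. Since $\lim_{n}|n|_p^{1/n}=1$, the limit is $1=\max\{|\alpha|_p,1\}$. For $\alpha\neq 1$ with $|\alpha|_p=1$, this is exactly Lemma~\ref{lim=1}: writing $|\zeta-\alpha|_p$ over all $\zeta^n=1$ as $|\alpha^n-1|_p$ when $\alpha^n\neq1$, and observing that the excluded terms $f(\zeta)=0$ are those $\zeta$ with $\zeta=\alpha$, i.e.\ they occur only when $\alpha^n=1$, in which case removing them still leaves a nonzero product whose $n$-th root tends to $1$ by the argument of Lemma~\ref{lim=1} (the cofactor $\bullet$ lies in a finite set bounded away from $0$, and the remaining factor is $|n|_p$ up to that finite ambiguity). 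So in every case the limit equals $\max\{|\alpha|_p,1\}$.

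The main obstacle — and the reason this is not completely routine — is the case $|\alpha|_p=1$, $\alpha\neq 1$, $\alpha$ not a root of unity (e.g.\ $\alpha=1+p$): here $|\zeta-\alpha|_p$ is \emph{not} constant in $\zeta$, because $|\alpha-\zeta|_p<1$ precisely for those $\zeta$ congruent to the Teichm\"uller-type root $\xi$ provided by Lemma~\ref{nearest}. The delicate point is to control how small $|\alpha-\zeta|_p$ can get and how many such $\zeta$ there are: by Lemma~\ref{1-zeta} the factors $|\xi-\zeta|_p=|1-\zeta/\xi|_p$ that drop below $1$ are exactly those indexed by primitive $p^r$-th multiples, and these are bounded below by $p^{-1/(p-1)}$ (the worst case $r=1$), with their product contributing the harmless factor $|n|_p$. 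Once one knows that all but finitely many factors equal $1$ and the exceptional ones multiply to $|n|_p$ times something in a fixed finite set of nonzero values, taking $n$-th roots and using $\lim_n|n|_p^{1/n}=1$ closes the argument; this is precisely the bookkeeping carried out in Lemma~\ref{lim=1}, which I would invoke directly.
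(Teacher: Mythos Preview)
Your argument is close to the paper's but contains a real gap: you read the condition $f(\zeta)\neq 0$ as $\zeta\neq\alpha$, i.e.\ you treat $f$ as the single linear factor $t-\alpha$. In the lemma, $f$ is the \emph{full} polynomial from Theorem~\ref{Mp-ext1}, and $\alpha$ is merely one of its roots; the exclusion removes \emph{every} root of unity among the roots of $f$, not only $\zeta=\alpha$. This is exactly the point of the reduction in the proof of Theorem~\ref{Mp-ext1}: the exclusion set $\{\zeta:f(\zeta)=0\}$ stays fixed across all linear factors, which is what makes both sides multiplicative in $f$. For $|\alpha|_p\neq 1$ your argument survives anyway (each retained factor equals $\max\{|\alpha|_p,1\}$ and the number of excluded terms is bounded), but for $|\alpha|_p=1$ the extra excluded factors $|\zeta_0-\alpha|_p$, with $\zeta_0$ a root of $f$ other than $\alpha$, must be accounted for.

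The fix is what the paper does: write
\[
\prod_{\zeta^n=1;\,f(\zeta)\neq 0}|\zeta-\alpha|_p
=\frac{\displaystyle\prod_{\zeta^n=1;\,\zeta\neq\alpha}|\zeta-\alpha|_p}{\displaystyle\prod_{\zeta^n=1;\,f(\zeta)=0,\,\zeta\neq\alpha}|\zeta-\alpha|_p}
\]
(dropping ``$\zeta\neq\alpha$'' in both products when $\alpha$ is not a root of unity). The denominator is a product over a subset of the fixed finite set $\{\zeta:f(\zeta)=0\}\setminus\{\alpha\}$, hence lies in a finite set of nonzero values independent of $n$, so its $n$-th root tends to $1$. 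Your handling of the numerator --- Krull sharpening for $|\alpha|_p\neq1$, and Lemma~\ref{lim=1} together with the $|n|_p$ computation when $\alpha^n=1$ for $|\alpha|_p=1$ --- is then correct and agrees with the paper. The difference between your case split by $|\alpha|_p$ and the paper's split by ``root of unity or not'' is purely cosmetic.
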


\begin{proof} Suppose that $\alpha$ is not a root of unity. 
Then we have 
$$\ds \prod_{\zeta^n=1; f(\zeta)\neq 0} |\zeta-\alpha|_p=\frac{\ds \prod_{\zeta^n=1}|\zeta-\alpha|_p}{\ds \prod_{\zeta^n=1;f(\zeta)=0}|\zeta-\alpha|_p}.$$ 
The denominator lies in a finite set independent of $n$. Hence 
$\text{(denominator)}^{1/n}\to 1$ as $n\to \infty$. 
If $|\alpha|_p\neq 1$, then 
$\text{(numerator)}^{1/n}=|\alpha^n-1|_p^{1/n}={\rm max}\{|\alpha|_p^n,1\}^{1/n}={\rm max}\{|\alpha|_p,1\}$ holds. 
If $|\alpha|_p=1$, then Lemma \ref{lim=1} assures $\ds \lim_n|\alpha^n-1|_p^{1/n}=1$. 

Suppose instead that $\alpha$ is a root of unity. Then we have 
$$\ds \prod_{\zeta^n=1; f(\zeta)\neq 0} |\zeta-\alpha|_p=\frac{\ds \prod_{\zeta^n=1;\zeta\neq \alpha}|\zeta-\alpha|_p}{\ds \prod_{\zeta^n=1;f(\zeta)=0,\zeta\neq \alpha}|\zeta-\alpha|_p}.$$ 
We have $\text{(denominator)}^{1/n}\to 1$ as $n\to \infty$. 
For $n$'s with $\alpha^n\neq 1$,  we have $\text{(numerator)}=|\alpha^n-1|_p$. 
The previous lemma assures $\text{(numerator)}^{1/n}\to 1$ as $n\to \infty$. 
For $n$'s with $\alpha^n=1$, we have 
$\ds \text{(numerator)}=\prod_{\zeta^n=1,\zeta\neq 1}|\zeta-1|_p=|n|_p$. 
Since $1/n\leq|n|_p \leq 1$, we have $\text{(numerator)}^{1/n}\to 1$ as $n\to \infty$. 
\end{proof} \

\begin{proof}[Proof of Theorem \ref{Mp-ext1}] 
Let $\ds f(t)=a_0 t^{d-l}\prod_i(t-\alpha_i) \in \C_p[t^{\pm 1}]$. 
It is sufficient to prove $$\lim_{n\in \N} \prod_{\zeta^n=1; f(\zeta)\neq 0} |f(\zeta)|_p^{1/n}=|a_0|_p\prod_i {\rm max}\{|\alpha_i|_p, 1\}$$
where $\zeta$ runs through roots of unity in $\C_p$. 
Since the both side of this equality is multiplicative with respect to $f$, 
it is sufficient to prove the assertion on each factor $t-\alpha_i$. 
Now the assertion follows from Lemma \ref{keylemma}. 
\end{proof}



\subsection{Alternative modification of the integral path} 
\label{sekibunro}
The Euclidean Mahler measure of $f(t)\in \C[t^{\pm1}]$ satisfying Jensen's formula was defined 
as the limit of the integral along a path $\gamma:S^1\to \C$ 
as $\gamma$ uniformly approaches the unit circle (\cite{EverestWard1999}). 

Here we give an analogous argument for our $p$-adic Mahler measure of $f(t)\in \Cp[t^{\pm 1}]$. 
We consider the image of roots of unity under a function $g:\C_p\to \C_p$ and 
let $g$ approach ${\rm id}$ uniformly with respect to the $p$-adic topology. 
Namely, for each function $g$ with $f(g(\zeta))\neq 0$ for any root $\zeta$ of unity, 
we put 
$\ds \log \mh_{p,g}(f(t))
:=\lim_{n\in \N} \frac{1}{n}\sum_{\zeta^n=1} \log |f(g(\zeta))|_p$, 
where the limit is taken with respect to the Euclidean topology. 
For each $\varepsilon>0$, we consider $g$'s with 
${\rm sup}\{|g(\zeta)-\zeta|_p\mid \zeta \in \text{(roots of unity)}\}<\varepsilon$ 
and let $\varepsilon\to 0$. 
(Such $g$ exists for each $\varepsilon$, e.g., $g(z)=z+\varepsilon/2$.) 
Then we have 

\begin{prp} 
For any function $g$ sufficiently close to ${\rm id}$, 
the limit $\mh_{p,g}(f(t))$ exists and the value is independent of $g$. 
The limit $\ds \lim_{g\to {\rm id}}\mh_p(f(g(t)))$ exists and coincides with $\mh_p(f(t))$
satisfying Jensen's formula. 
\end{prp}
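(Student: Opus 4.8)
The plan is to run the argument in parallel with the proof of Theorem~\ref{Mp-ext1}, with Lemma~\ref{keylemma} replaced by a perturbed version. Write $\varepsilon_g:=\sup\{|g(\zeta)-\zeta|_p : \zeta\text{ a root of unity in }\Cp\}$. The claim to establish is: for every $\alpha\in\Cp^{*}$ and every admissible $g$ (meaning $f(g(\zeta))\neq0$ for all roots of unity $\zeta$) with $\varepsilon_g<p^{-1/(p-1)}$,
$$\lim_{n\in\N}\prod_{\zeta^n=1}|g(\zeta)-\alpha|_p^{1/n}=\max\{|\alpha|_p,1\}.$$
Granting this, I would first note that $|g(\zeta)|_p=1$ for every root of unity $\zeta$ (Krull sharpening, since $|\zeta|_p=1$ and $|g(\zeta)-\zeta|_p<1$), so in $f(t)=a_0t^{d-l}\prod_i(t-\alpha_i)$ the factors $a_0$ and $t^{d-l}$ contribute $\log|a_0|_p$ and $0$ to $\frac1n\sum_{\zeta^n=1}\log|f(g(\zeta))|_p$. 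Since $\log|\cdot|_p$ sends products to sums, the defining limit of $\log\mh_{p,g}$ is additive over factorizations of $f$, so summing the linear contributions gives $\log\mh_{p,g}(f(t))=\log|a_0|_p+\sum_i\log\max\{|\alpha_i|_p,1\}=\log\mh_p(f(t))$ by Theorem~\ref{pJensen}. This simultaneously yields existence of $\mh_{p,g}(f(t))$, its independence of $g$, the identity $\lim_{g\to\mathrm{id}}\mh_p(f(g(t)))=\mh_p(f(t))$, and Jensen's formula for the common value.

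For the perturbed key lemma I would distinguish the three cases of Lemma~\ref{keylemma}. If $|\alpha|_p\neq1$, then $|g(\zeta)|_p=1\neq|\alpha|_p$ forces $|g(\zeta)-\alpha|_p=\max\{|\alpha|_p,1\}$ for every root of unity $\zeta$, and the assertion is immediate. The substantive case is $|\alpha|_p=1$. Here Lemma~\ref{nearest} gives the unique non-$p$-power-th root of unity $\xi$ with $|\alpha-\xi|_p<1$. Factoring a root of unity $\zeta$ into its prime-to-$p$ part $\zeta'$ and its $p$-power part $\zeta''$, we have $\zeta\equiv\zeta'\pmod{\m_p}$; if in addition $|\zeta-\alpha|_p<1$, then $\zeta'\equiv\xi\pmod{\m_p}$, hence $\zeta'=\xi$ by the injectivity in Lemma~\ref{nearest}, i.e.\ $\zeta=\xi\zeta''$. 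Since two distinct $p$-power-th roots of unity are at mutual $p$-adic distance $\ge p^{-1/(p-1)}$ by Lemma~\ref{1-zeta}, the strong triangle inequality shows there is \emph{at most one} root of unity $\zeta_0$ with $|\zeta_0-\alpha|_p\le\varepsilon_g$, and $\zeta_0$ does not depend on $n$.

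Now for every $n$ and every $n$-th root of unity $\zeta\neq\zeta_0$ we have $|g(\zeta)-\zeta|_p<\varepsilon_g<|\zeta-\alpha|_p$, so the Krull sharpening gives $|g(\zeta)-\alpha|_p=|\zeta-\alpha|_p$; when $\zeta_0^n=1$, the single remaining factor $|g(\zeta_0)-\alpha|_p$ is a fixed positive real (positive since $f(g(\zeta_0))\neq0$), whose $1/n$-th power tends to $1$. Hence $\prod_{\zeta^n=1}|g(\zeta)-\alpha|_p$ and $\prod_{\zeta^n=1,\,\zeta\neq\zeta_0}|\zeta-\alpha|_p$ have the same $1/n$-th-power limit, and the latter is handled as in Lemma~\ref{lim=1} and the proof of Lemma~\ref{keylemma}: when $\alpha$ is not a root of unity it equals $|\alpha^n-1|_p$ up to dividing by the fixed factor $|\zeta_0-\alpha|_p$, and its $1/n$-th power tends to $1$ by Lemma~\ref{lim=1} (valid since then $\alpha\neq1$); when $\alpha$ is a root of unity it equals $\prod_{\zeta^n=1,\zeta\neq1}|1-\zeta|_p=|n|_p$ after reindexing by $\alpha$ in case $\alpha^n=1$, or $|\alpha^n-1|_p$ in case $\alpha^n\neq1$, and its $1/n$-th power tends to $1$ since $1/n\le|n|_p\le1$ and by Lemma~\ref{lim=1} (only the first subcase occurs when $\alpha=1$). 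This gives the perturbed key lemma, hence the proposition.

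The main obstacle I anticipate is exactly the case $|\alpha|_p=1$ — specifically, the bookkeeping that replacing $\zeta$ by $g(\zeta)$ disturbs only boundedly many (in fact at most one) of the $n$-th roots of unity, and changes the product by a factor bounded away from $0$ and $\infty$ uniformly in $n$; once this is isolated, everything reduces to the already-proved Lemmas~\ref{1-zeta}, \ref{nearest}, \ref{lim=1} and \ref{keylemma}. A minor additional point to record is that admissible $g$ arbitrarily close to $\mathrm{id}$ do exist: among the translations $g(z)=z+c$ with $|c|_p<\varepsilon$, admissibility excludes only the finitely many values $c=\alpha_i-\zeta_0^{(i)}$ (at most one nearby root of unity per root $\alpha_i$ of $f$), and the threshold $p^{-1/(p-1)}$ is moreover uniform in $f$.
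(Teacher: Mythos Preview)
Your proof is correct and follows essentially the same route as the paper: reduce to linear factors via multiplicativity of $\log|\cdot|_p$, dispatch the case $|\alpha|_p\neq1$ by the Krull sharpening, and for $|\alpha|_p=1$ use the Teichm\"uller lift $\xi$ from Lemma~\ref{nearest} to show that the perturbation $g$ affects only a bounded-in-$n$ set of factors, after which Lemmas~\ref{1-zeta} and~\ref{lim=1} finish the job. The paper's write-up is terser---it just says ``by a similar argument to Lemma~\ref{lim=1}'' after writing $|g(\zeta)-\alpha|_p=|(g(\zeta)-\zeta)+(\zeta-\xi)+(\xi-\alpha)|_p$---and works with the weaker threshold $\varepsilon<1$, which yields boundedly many (rather than at most one) exceptional roots of unity; your sharper threshold $\varepsilon_g<p^{-1/(p-1)}$ is a nice simplification that makes the bookkeeping cleaner. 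One cosmetic point: with $\varepsilon_g$ defined as a supremum you should write $|g(\zeta)-\zeta|_p\le\varepsilon_g$ rather than $<\varepsilon_g$, but since you have the strict inequality $\varepsilon_g<|\zeta-\alpha|_p$ on the other side the Krull sharpening still applies.
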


\begin{proof} 
Since $\log|\bullet|_p$ satisfies the logarithm laws, it is sufficient to consider $f(t)=t-\alpha$ for $\alpha\in \C_p$. 
If $|\alpha|_p\neq 1$, then the assertion immediately follows from a property of ultra-metric. 
If $|\alpha|_p=1$, then we have a unique non-$p$-power-th root $\xi$ of unity satisfying $|\alpha-\xi|_p<1$.
Assume ${\rm sup}\{|g(\zeta)-\zeta|_p\mid \zeta \in \text{(roots of unity)}\}<\varepsilon<1$. 
If $\zeta^n=1$, then we have 
$|g(\zeta)-\alpha|_p=|(g(\zeta)-\zeta)+(\zeta-\xi)+(\xi-\alpha)|_p$. 
By a similar argument to Lemma \ref{lim=1}, 
we obtain $\ds \lim_{n\in \N} \prod_{\zeta^n=1} |g(\zeta)-\alpha|_p=1$. 
Therefore we have $\mh_{p,g}(f(t))={\rm max}\{|\alpha|_p, 1\}$. 
\end{proof}

\section{
$p$-adic Mahler measure and $\Z$-covers of links}  

\subsection{$\Z$-covers and {\it $\mh_p$}} 
Let $M$ be a $\Q$HS$^3$ and let $L$ be a $d$-component link in $M$. 
Let $h_\infty: X_\infty\to X=M-L$ be a $\Z$-cover, which we call a \emph{$\Z$-cover over $(M,L)$}, with a generator $t$ of the deck transformation group $t^{\Z}$. 
Then the Alexander polynomial $A_L(t) \in \Z[t]\subset \Z[t^\Z]$ is a generator of the principal ideal of the Fitting ideal ${\rm Fitt}_{\Z[t^\Z]}H_1(X_\infty)$ of the Alexander module over $\Z[t^\Z]$, as explained in Section 1. 

\begin{exa} Suppose that every component $L_i$ of $L$ is null-homologous, and let $t_i\in H_1(X)$ denote the meridian of each $L_i$. Then a standard $\Z$-cover called \emph{the total linking number cover} (\emph{TLN-cover}) \emph{over} $(M,L)$ is defined by the surjective homomorphism $\tau:H_1(X,\Z)\surj \Z;$ $\forall t_i \mapsto 1$. If $A_L(t)\neq 0$, then we have ${\rm Fitt}_{\Z[t^\Z]}H_1(X_\infty)=(A_L(t))$ 
by \cite[Lemma 3.1]{KM2008}. 

If $M=S^3$, then the multivariable Alexander polynomial $\Delta_L(t_1,\ldots,t_d)$, 
the reduced Alexander polynomial $\Delta_L(t)=\Delta_L(t,\ldots,t)$, 
and the Hosokawa polynomial $H_L(t)$ of $L$ are defined. 
For the TLN-cover over $(S^3,L)$, we have 
$$A_L(t)=(t-1)^{d-1}H_L(t)=
\left\{ \begin{array}{ll}
\Delta_L(t) & {\rm if}\ d=1,\\
(t-1)\Delta_L(t) & {\rm if}\ d>1.  
\end{array} \right.  
$$ 
\end{exa}

For each $n\in \N_{>0}$, let $M_n\to M$ denote the branched $\Z/n\Z$-cover obtained as the Fox completion of the $\Z/n\Z$-subcover of $h_\infty$. Then the orders of groups and the cyclic resultants of $A_L(t)$ satisfy the following well-known formula: 
\begin{prp} \label{Fox} Let the notation be as above. Then there exists some finite set $\mca{C}$ of $\N$ such that for any $n\in \N_{>0}$ 
$$|H_1(M_n)|=|R(A_L(t),\nu_n(t))|\times C$$
holds for some $C\in \mca{C}$. 
\end{prp}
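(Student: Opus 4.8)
The plan is to factor the comparison through the unbranched cover. Write $X_n\to X$ for the $\Z/n\Z$-subcover of $h_\infty$, so that $M_n$ is its Fox completion, and set $\mca A=H_1(X_\infty)$, a finitely generated $\Z[t^{\Z}]$-module. I would proceed in three steps, and then collect the error terms.

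\emph{Step 1 (unbranched homology).} Since $X_\infty$ is connected, the Milnor exact sequence of the infinite cyclic cover $X_\infty\to X_n$, whose deck group is generated by $t^n$, degenerates to a short exact sequence
\[
0\to \mca A/(t^n-1)\mca A\to H_1(X_n)\to \Z\to 0
\]
which splits since $\Z$ is free, the quotient $H_1(X_n)\to\Z$ being the covering homomorphism of $X_\infty\to X_n$. Thus $H_1(X_n)\cong \mca A/(t^n-1)\mca A\oplus\Z$.

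\emph{Step 2 (Fox completion).} The Fox completion glues a solid torus to each boundary torus of $X_n$ lying over $\partial N(L_i)$; the number of these over $\partial N(L_i)$ is $\gcd\bigl(n,\tau(m_i),\tau(\ell_i)\bigr)$ for a meridian--longitude basis $m_i,\ell_i$, a bounded and (outside a degenerate situation, which contributes a further bounded periodic term by a module-theoretic argument) periodic function of $n$, and each gluing imposes one relation on $H_1(X_n)$, namely the class of the corresponding filling slope. One shows that the cumulative effect of these relations, which in particular kills the free $\Z$-summand, is to replace $(t^n-1)$ by $\nu_n(t)=(t^n-1)/(t-1)$ up to a bounded, eventually periodic correction: that is,
\[
|H_1(M_n)|=\bigl|\mca A/\nu_n(t)\mca A\bigr|\times C'_n,\qquad C'_n\ \text{bounded and eventually periodic in }n.
\]

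\emph{Step 3 (from the module to the resultant).} Using that $A_L(t)$ generates the smallest principal ideal containing ${\rm Fitt}_{\Z[t^{\Z}]}(\mca A)$, one compares $\mca A$ with its order approximation $\Z[t^{\Z}]/(A_L(t))$: by the structure theory of finitely generated torsion $\Z[t^{\Z}]$-modules, passing to this approximation (killing the maximal finite submodule and trading ${\rm Fitt}_{\Z[t^\Z]}(\mca A)$ for the principal ideal $(A_L(t))$) changes $|\mca A/\nu_n(t)\mca A|$ only by a factor in a finite set independent of $n$. Finally $\bigl|\Z[t^{\Z}]/(A_L(t),\nu_n(t))\bigr|$ equals, by the Chinese remainder theorem over the cyclotomic factors of $\nu_n(t)$ with a bounded correction at the finitely many primes dividing resultants of distinct cyclotomic polynomials, the product $\prod_{d\mid n,\,d>1}\bigl|N_{\Q(\zeta_d)/\Q}(A_L(\zeta_d))\bigr|=\prod_{\zeta^n=1,\,\zeta\neq1}|A_L(\zeta)|=|R(A_L(t),\nu_n(t))|$. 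Multiplying the corrections from Steps 2 and 3 gives $|H_1(M_n)|=|R(A_L(t),\nu_n(t))|\times C_n$ with $C_n$ bounded and eventually periodic, hence taking only finitely many values; adjoining its values at the finitely many small $n$ yields the finite set $\mca C$. When $R(A_L(t),\nu_n(t))=0$, i.e.\ $A_L$ and $\nu_n$ share a root, one checks separately that $\mca A/\nu_n(t)\mca A$ has positive rank and that this survives the bounded number of filling relations, so that $|H_1(M_n)|=0$ as well.

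The main obstacle is Step 2: identifying the classes of the Fox-completion filling slopes inside $H_1(X_n)=\mca A/(t^n-1)\mca A\oplus\Z$ precisely enough to see that the passage to $H_1(M_n)$ differs from $\mca A/\nu_n(t)\mca A$ only by a bounded periodic factor. The delicate interaction is among the free $\Z$-summand, the behaviour of $\mca A$ at $t=1$ (nontrivial exactly when $A_L(1)=0$, e.g.\ for the total linking number cover of a link with $d\ge 2$, where $A_L(t)=(t-1)\Delta_L(t)$), and the linking data $\tau(m_i),\tau(\ell_i)$; this is where the careful bookkeeping of \cite{KM2008} produces the explicit finite set $\mca C$.
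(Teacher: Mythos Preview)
Your outline is a reasonable from-scratch strategy, but the paper itself does not give a direct argument: it simply invokes Sakuma's theorem \cite[Theorem 3]{Sakuma1981} or Porti's theorem \cite[Theorem 1.1]{Porti2004}, which already supply a formula for $|H_1(M_n)|$ (or its torsion part) in terms of Alexander-type data for the branched cover, and then says one argues as in the proof of \cite[Theorem 2.1]{KM2008} to extract the bounded constant. In other words, the paper outsources precisely your Step~2---the delicate analysis of the Fox completion and the filling slopes---to those references, and you yourself end up pointing back to \cite{KM2008} for that step. So your route and the paper's converge, but the paper's is shorter because Sakuma/Porti package the branched-cover order computation as a black box.

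Two places where your sketch is looser than it needs to be. First, in Step~3 the detour through the Chinese remainder theorem over cyclotomic factors is unnecessary and introduces fictitious ``corrections'': since $\nu_n(t)$ is monic, $\Z[t]/(\nu_n)$ is free of rank $n-1$ over $\Z$, and multiplication by $A_L(t)$ on it has determinant $\pm R(A_L,\nu_n)$, so $\bigl|\Z[t^{\Z}]/(A_L,\nu_n)\bigr|=|R(A_L,\nu_n)|$ on the nose whenever the resultant is nonzero. Second, your appeal to ``the structure theory of finitely generated torsion $\Z[t^{\Z}]$-modules'' glosses over the fact that $\Z[t^{\Z}]$ is a two-dimensional regular ring, not a PID; what you need is the pseudo-isomorphism theory (finite kernel and cokernel) between $\mca A$ and an elementary module with characteristic ideal $(A_L)$, and then to check that the defect contributed by a fixed pseudo-null module under $-\,/\nu_n(-)$ is bounded independently of $n$. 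This is true, and is essentially what \cite{KM2008} carries out, but it is not just ``structure theory'' in the PID sense.
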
 
This proposition follows from Sakuma's theorem (\cite[Theorem 3]{Sakuma1981}) or Porti's theorem (\cite[Theorem 1.1]{Porti2004}) by a similar argument to \cite[Proof of Theorem 2.1]{KM2008}. 
It was initially proved for a knot $L$ in $M=S^3$ by Fox (\cite{FoxFDC3}, \cite{Weber1979}) and for a link $L$ in $M=S^3$ by Mayberry--Murasugi (\cite{MM1982}). 

By Propositions \ref{asym}, \ref{asym-p}, and Theorem \ref{Fox}, we obtain the following asymptotic formulae: 
\begin{thm} \label{pGS} Let the notation be as above. 
If $A_L(t)\neq 0$, then 
\begin{align*}
&\lim_{n\in \N;\text{value}\neq 0}|H_1(M_n)|^{1/n}=\mh(A_L(t)),\\
&\lim_{n\in \N;\text{value}\neq 0}||H_1(M_n)||_p^{1/n}=\mh_p(A_L(t)).
\end{align*}
\end{thm}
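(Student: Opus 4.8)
The plan is to deduce both asymptotic formulae directly from Proposition \ref{Fox} together with the resultant asymptotics already established (Proposition \ref{asym} for the Euclidean case and Proposition \ref{asym-p} for the $p$-adic case). The point is that Proposition \ref{Fox} expresses $|H_1(M_n)|$ as $|R(A_L(t),\nu_n(t))|\times C$ with $C$ ranging over a fixed finite set $\mca{C}\subset\N_{>0}$, so up to a bounded multiplicative error the homology order \emph{is} the cyclic resultant. Taking $n$-th roots kills the bounded factor in the limit, and the remaining limit is exactly the one computed in Propositions \ref{asym} and \ref{asym-p}.

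More precisely, first I would fix the convention that in each formula the limit is taken over those $n\in\N$ for which the relevant quantity is nonzero; by Proposition \ref{Fox} the set $\{n : H_1(M_n) \text{ infinite}\}=\{n : R(A_L(t),\nu_n(t))=0\}$, so the two index sets agree and the statement is well-posed. For such $n$ write $|H_1(M_n)| = |R(A_L(t),\nu_n(t))|\cdot C_n$ with $C_n\in\mca{C}$. Since $\mca{C}$ is finite, $1\le C_n\le \max\mca{C}=:C_{\max}$, hence $C_n^{1/n}\to 1$ as $n\to\infty$, and likewise $\|C_n\|_p = |C_n|_p$ satisfies $ |C_{\max}|_p \le |C_n|_p \le 1$ after clearing a bounded denominator, so $|C_n|_p^{1/n}\to 1$ as well. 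Therefore
\begin{align*}
\lim_{n\in\N;\,\text{value}\neq 0}|H_1(M_n)|^{1/n}
&=\lim_{n\in\N;\,\text{value}\neq 0}|R(A_L(t),\nu_n(t))|^{1/n}\cdot\lim_{n}C_n^{1/n}
=\mh(A_L(t)),
\end{align*}
by Proposition \ref{asym}, and the identical argument with $|\cdot|_p$ in place of $|\cdot|$ and Proposition \ref{asym-p} in place of Proposition \ref{asym} gives
\begin{align*}
\lim_{n\in\N;\,\text{value}\neq 0}\|H_1(M_n)\|_p^{1/n}
=\lim_{n\in\N;\,\text{value}\neq 0}|R(A_L(t),\nu_n(t))|_p^{1/n}\cdot\lim_{n}|C_n|_p^{1/n}
=\mh_p(A_L(t)).
\end{align*}

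The only genuine subtlety — and the step I expect to require the most care — is the treatment of the bounded factors $C_n$ in the $p$-adic statement: one must make sure the $C_n$ lie in a finite subset of $\N_{>0}$ (not merely of $\Q$), so that their $p$-adic absolute values are bounded away from $0$ uniformly in $n$; this is exactly what Proposition \ref{Fox} provides, since $C_n = |H_1(M_n)|/|R(A_L(t),\nu_n(t))|$ and $|R(A_L(t),\nu_n(t))|$ divides $|H_1(M_n)|$ in $\Z$ whenever the latter is finite. Granting that, the squeeze $|C_{\max}|_p \le |C_n|_p \le 1$ forces $|C_n|_p^{1/n}\to 1$ and the rest is immediate. (Everything else — the agreement of the index sets, the multiplicativity of $x\mapsto x^{1/n}$ — is formal.) Hence both formulae follow.
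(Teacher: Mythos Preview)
Your proposal is correct and is exactly the argument the paper has in mind: the text simply says the theorem follows from Propositions \ref{asym}, \ref{asym-p}, and \ref{Fox}, and you have spelled out the routine step that the bounded factor $C_n\in\mca{C}$ disappears after taking $n$-th roots. One small inaccuracy: the squeeze you wrote, $|C_{\max}|_p\le |C_n|_p\le 1$, need not hold (the archimedean maximum has no reason to have the smallest $p$-adic norm); the correct lower bound is $\min_{C\in\mca{C}}|C|_p>0$, which is all you need since $\mca{C}\subset\N_{>0}$ is finite.
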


The Euclidean formula for knots in $S^3$ was proved by \cite{GS1991}, \cite{Riley1990gr}. 
Riley also gave an estimation of $||H_1(M_n)||_p^{-1}$ from above. 

We denote $p=\infty$ for Euclidean objects. 
For $f(t)\in \Z[t^{\pm1}]$, 
the set of $p$-adic Mahler measures $\mh_p(f(t))$ for $p<\infty$ do not tell that for $p=\infty$. 
The set of those for $p\leq \infty$ would have some meaning. 

\begin{exa} \label{ex0} We use the notation in the Rolfsen table (\cite{Rolfsen1976}). 

(1) If $K=4_1$ (figure 8-knot), then the Alexander polynomial $\Delta_K(t)=t^2-3t+1$ satisfies 
$\mh(\Delta_K(t))=\frac{3+\sqrt{5}}{2}$, $\mh_p(\Delta_K(t))=0$ ($p<\infty$). 

(2) If $L=4^2_1$ (Solomon's knot, sigillum Salomonis), then one of the reduced Alexander polynomial $\Delta_L(t,t^{-1})=2$ satisfies $\mh(2)=0,\mh_2(2)=1/2, \mh_p(2)=0$ ($p\neq 2$). 
The same values are obtained for $A_L(t)=2(t-1)$. 
\end{exa} 
\

\begin{rem}($\Z^d$-covers, non-$\Q$HS$^3$-cases, etc.)
We have an asymptotic formula for $\Z^d$-covers with use of the multivariable Alexander polynomials and the Mahler measures. 
In addition, even if $M_n$'s are not necessarily $\Q$HS$^3$, we have an asymptotic formula of the torsion subgroups $H_1(M_n)_{\rm tor}$ with use of the higher Alexander polynomials and the Mahler measures (\cite{SW2002M}, \cite{Le2014}). 

It is noteworthy that sometimes we obtain special values of zeta functions, $L$-functions, and multiple zeta functions as the Mahler measures of multivariable polynomials (\cite{Smyth1981}, \cite{Lalin2003}). 
Furthermore, in several examples the hyperbolic volumes of knot exteriors relate to the Mahler measures of the $A$-polynomials (\cite{Boyd2002Mahler}, \cite{BoydRVD2003}, \cite{Lalin2004}).
An asymptotic formula with use of the hyperbolic volumes is known as the Bergeron--Venkatesh conjecture and Le's theorem (\cite{BV2013}, \cite{Le2014}). 
We expect $p$-adic analogues of them. 
\end{rem} 

Finally we remark that a recent development of such a study for twisted Alexander polynomials of knots is due to Tange (\cite{TangeRyoto1}).

\subsection{Iwasawa $\mu_p$-invariant and {\it $\mh_p$}} 
Let $M$ be a $\Q$HS$^3$, let $L$ be a link in $M$, and let $\{h_{p^r}:M_{p^r}\to M\}_r$ be an inverse system of branched $\Z/p^r\Z$-covers over $(M,L)$. 
(For such an object, see Subsection \ref{Zp-cover} also.) 
Let $t$ denote the topological generator of the inverse limit of the deck transformation groups $\Z/p^r\Z$ corresponding to 1. Then the Iwasawa module $\mca{H}_p:=\varprojlim_r H_1(M_{p^r}, \Zp)$ is a finitely generated module over $\Zp[[t^{\Zp}]]\cong \Zp[[T]]; t\mapsto 1+T$. 
By \cite[Theorem 4.17]{Ueki2}, 
$M_{p^r}$'s are $\Q$HS$^3$'s if and only if its characteristic polynomial in $\Zp[[t^{\Zp}]]$ 
does not vanish at any $p$-power-th root of unity. 

The following formula is an analogue of Iwasawa's class number formula (\cite{Iwasawa1959}), which was initially given by \cite{HMM2006} and generalized by \cite{KM2008} and \cite{Ueki2}: 
\begin{prp}[Iwasawa type formula] \label{Iwasawa} Let the notation be as above. 
If $M_{p^n}$'s are $\Q$HS$^3$'s, then there exist some $\lambda_p,\mu_p \in \N, \nu_p\in \Z$ called the Iwasawa invariants such that for any $r\gg 0$ in $\N$ 
$$||H_1(M_{p^r})||_p^{-1}=p^{\lambda_p r +\mu_p p^r+\nu_p}$$ holds. 
\end{prp}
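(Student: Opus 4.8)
The plan is to reduce the Iwasawa type formula for the branched covers $M_{p^r}$ to the classical structure theory of finitely generated modules over the Iwasawa algebra $\Lambda:=\Zp[[T]]$, exactly as in Iwasawa's original argument for class numbers of $\Zp$-extensions of number fields. First I would invoke Proposition \ref{Fox} to replace $|H_1(M_{p^r})|$ by $|R(A_L(t),\nu_{p^r}(t))|\times C_r$ with $C_r$ ranging over the finite set $\mca{C}$; since $||C_r||_p$ then takes only finitely many values, it contributes a bounded (eventually constant along suitable subsequences) term, but to get the clean formula valid for \emph{all} $r\gg0$ one should instead work intrinsically with the Iwasawa module $\mca{H}_p=\varprojlim_r H_1(M_{p^r},\Zp)$ and the exact sequence relating $\mca{H}_p/(\nu_{p^r}(t))\,\mca{H}_p$, or more precisely $\mca{H}_p/\omega_r\mca{H}_p$ where $\omega_r=(1+T)^{p^r}-1$, to $H_1(M_{p^r},\Zp)$. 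The hypothesis that each $M_{p^r}$ is a $\Q$HS$^3$ guarantees (via \cite[Theorem 4.17]{Ueki2}, quoted above) that the characteristic polynomial of $\mca{H}_p$ is prime to every $\omega_r$, so each quotient $\mca{H}_p/\omega_r\mca{H}_p$ is finite and equals $H_1(M_{p^r},\Zp)$ up to the controlled discrepancy measured by $\mca{C}$.

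Next I would apply the structure theorem: there is a pseudo-isomorphism
\[
\mca{H}_p \sim \Lambda^{s}\oplus\bigoplus_{i}\Lambda/(p^{a_i})\oplus\bigoplus_{j}\Lambda/(g_j(T)^{b_j})
\]
with the $g_j$ distinguished irreducible polynomials; finiteness of all the $M_{p^r}$ forces $s=0$. Writing $\lambda_p=\sum_j b_j\deg g_j$ and $\mu_p=\sum_i a_i$, one computes $|\Lambda/(\omega_r,p^{a})|$ and $|\Lambda/(\omega_r,g(T)^{b})|$ directly: the $p$-part of $\omega_r$ in $\Lambda/(p^{a})$ grows like $p^{a p^r}$, while $\omega_r$ evaluated against the $g_j$-part, using $\nu_{p^r}(t)=\omega_{r}/\omega_0$ and Weierstrass preparation, grows like $p^{(\text{const})}\cdot p^{r\cdot\lambda_p}$ for $r\gg0$ — this is where one needs that $g_j$ is prime to all $\omega_r$ so that the $p$-adic valuations of $g_j$ at $p^r$-th roots of unity stabilize in arithmetic progression. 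Summing the exponents and folding the pseudo-isomorphism error (finite kernel and cokernel, hence bounded $p$-adic size) together with the $\mca{C}$-discrepancy into the constant $\nu_p$ gives $||H_1(M_{p^r})||_p^{-1}=p^{\lambda_p r+\mu_p p^r+\nu_p}$ for all sufficiently large $r$.

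The main obstacle — and the only genuinely non-formal point — is controlling the contribution of the $\Lambda/(g_j(T)^{b_j})$ factors to $||\mca{H}_p/\omega_r\mca{H}_p||_p$ uniformly in $r$: one must show that $v_p\!\big(g_j(\zeta-1)\big)$, summed over primitive $p^k$-th roots of unity $\zeta$ and over $k\le r$, equals $\lambda_p r+(\text{const})$ eventually. This is the standard Iwasawa computation, and its engine is precisely the kind of estimate on $|1-\zeta|_p$ for $p$-power roots of unity recorded in Lemma \ref{1-zeta}, together with the non-vanishing of $g_j$ at those roots of unity. Since the present paper already has Lemmas \ref{1-zeta}, \ref{alpham}, and the resultant formalism in hand, and since Proposition \ref{Iwasawa} is explicitly attributed to \cite{HMM2006}, \cite{KM2008}, \cite{Ueki2}, I would present the proof as a streamlined adaptation of those references, emphasizing only the two inputs special to the branched setting: the finite ambiguity $\mca{C}$ from Proposition \ref{Fox} and the $\Q$HS$^3$ criterion of \cite{Ueki2} that makes every $\mca{H}_p/\omega_r\mca{H}_p$ finite.
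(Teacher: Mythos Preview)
The paper does not supply its own proof of Proposition~\ref{Iwasawa}; it is stated as a known result, with the sentence immediately preceding the proposition attributing it to \cite{HMM2006}, \cite{KM2008}, and \cite{Ueki2}. Your sketch is the standard Iwasawa-theoretic argument those references carry out --- pseudo-isomorphism to an elementary $\Lambda$-module, explicit computation of $|\Lambda/(\omega_r,p^a)|$ and $|\Lambda/(\omega_r,g^b)|$, and absorption of the bounded error terms into $\nu_p$ --- so there is nothing to compare: you have simply written out what the paper leaves to citation, and you already say as much in your final paragraph.

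One small comment on presentation: your opening detour through Proposition~\ref{Fox} and the finite set $\mca{C}$ is not needed once you commit to working with the Iwasawa module $\mca{H}_p$ directly. In the references the comparison between $\mca{H}_p/\omega_r\mca{H}_p$ and $H_1(M_{p^r},\Zp)$ is handled by an exact sequence with uniformly bounded kernel and cokernel (this is where \cite{Ueki2} does the work in the branched setting), and that bounded discrepancy already goes into $\nu_p$; invoking the resultant formula and $\mca{C}$ on top of this is redundant. If you drop that paragraph and begin straight from the $\Lambda$-module structure of $\mca{H}_p$, the write-up becomes cleaner and matches the cited sources more closely.
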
 

Suppose that the $\Z/p^r\Z$-covers are obtained from a $\Z$-cover over $(M,L)$ with the Alexander polynomial $A_L(t)$. 
Comparing with the results in the previous section, we obtain 
\begin{prp} \label{A-mu} 
$$\log \mh_p(A_L(t))=-\mu_p \log p.$$
\end{prp}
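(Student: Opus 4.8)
The plan is to combine the two asymptotic formulae — the $p$-adic Gordon–Smith formula (Theorem \ref{pGS}) and the Iwasawa type formula (Proposition \ref{Iwasawa}) — by taking $n$-th roots and passing to the limit along the subsequence $n = p^r$. Concretely, by Theorem \ref{pGS} we have
\begin{equation*}
\lim_{r\in\N;\,|H_1(M_{p^r})|\neq 0}\ \|H_1(M_{p^r})\|_p^{1/p^r} = \mh_p(A_L(t)),
\end{equation*}
since the subsequence $\{p^r\}_r$ is cofinal enough that the limit over it equals the limit over all $n$ with nonzero value (here one uses that $M_{p^r}$'s being $\Q$HS$^3$'s forces $|H_1(M_{p^r})|\neq 0$ for all $r\gg0$, so the restriction "value $\neq 0$" is vacuous along the tail). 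On the other hand, Proposition \ref{Iwasawa} gives, for $r\gg0$,
\begin{equation*}
\|H_1(M_{p^r})\|_p^{-1} = p^{\lambda_p r + \mu_p p^r + \nu_p},
\end{equation*}
hence $\|H_1(M_{p^r})\|_p^{1/p^r} = p^{-(\lambda_p r + \mu_p p^r + \nu_p)/p^r}$. Taking $\log$ of both expressions and $r\to\infty$, the terms $\lambda_p r/p^r$ and $\nu_p/p^r$ tend to $0$, leaving $\log\mh_p(A_L(t)) = -\mu_p\log p$.

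The first step, then, is to verify compatibility of the branched-cover systems: the $\Z/p^r\Z$-covers $M_{p^r}$ arising from the given $\Z$-cover over $(M,L)$ with Alexander polynomial $A_L(t)$ are exactly the ones to which both Theorem \ref{pGS} and Proposition \ref{Iwasawa} apply, so that $\|H_1(M_{p^r})\|_p$ is computed by the same $M_{p^r}$ in both. Second, I would invoke Proposition \ref{Fox} to identify $|H_1(M_{p^r})|$ with $|R(A_L(t),\nu_{p^r}(t))|\times C$ for $C$ in a finite set, which is what underlies Theorem \ref{pGS} in this setting and also guarantees the limit exists. Third, I would record the elementary fact $\lim_{r\to\infty}(\alpha r + \beta)/p^r = 0$ for constants $\alpha,\beta$, and the continuity of $\log$, to pass from the equality of limits to the equality of exponents.

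The only subtlety — hardly an obstacle — is making sure the two asymptotic formulae are genuinely about the same sequence and that the hypothesis "$M_{p^r}$'s are $\Q$HS$^3$'s" is consistently assumed; Proposition \ref{Iwasawa} requires it, and under it $A_L(t)\neq 0$ (its characteristic polynomial is nonzero), so Theorem \ref{pGS} applies. One should also note that the limit in Theorem \ref{pGS} is taken over all $n$, so restricting to the cofinal-in-spirit subsequence $n=p^r$ is legitimate precisely because convergence of the full sequence implies convergence of any subsequence to the same value. With these bookkeeping points settled, the computation in the previous paragraph finishes the proof; no new analytic input beyond Theorems \ref{pGS} and \ref{Mp-ext1} and Proposition \ref{Iwasawa} is needed.
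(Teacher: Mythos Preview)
Your proposal is correct and matches the paper's own argument: the paper states Proposition~\ref{A-mu} with the one-line justification ``Comparing with the results in the previous section, we obtain'', where the previous subsection contains Theorem~\ref{pGS} and the current one Proposition~\ref{Iwasawa}; the intended comparison is exactly the subsequence computation you wrote out, extracting $p^{-\mu_p}$ from the Iwasawa formula along $n=p^r$ and identifying it with $\mh_p(A_L(t))$ via the $p$-adic asymptotic formula. Your bookkeeping remarks (that the $\Q$HS$^3$ hypothesis makes the ``value $\neq 0$'' restriction vacuous along the tail, and that it forces $A_L(t)\neq 0$) are appropriate and fill in what the paper leaves implicit.
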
 
In this sense, the Iwasawa type formula is a $p$-adic refinement of the asymptotic formula (Theorem \ref{pGS}) with use of $\mh_p$. 

\begin{rem} 
We review what we have done for $\Z$-covers of $S^3$ branched over links $L$. 
By the Mayberry--Murasugi formula 
and the definition of $\mh_p$ imitating the Shnirel'man integral, 
if $A_L(t)$ does not vanish on the $p$-adic unit circle, then we have $\ds \lim_{n\in \N;\gcd(p,n)=1}||H_1(M_n)||_p=\mh_p(A_L(t))$. 
By the Iwasawa type formula, 
if $A_L(t)$ does not vanish at any $p$-power-th root of unity, then we have $\ds \lim_{n=p^r}||H_1(M_n)||_p=p^{-\mu_p}$. 
Note that the intersection of their domains of $n$ is empty. 
By Jensen's formula, the formula $\mh_p(A_L(t))={\rm max}\{|\text{coefficients}|_p\}$, and the $p$-adic Weierstrass preparation theorem, if $A_L(t)$ has no root on $|z|_p=1$, then $\log \mh_p=-\mu_p \log p$ immediately follows. 
We removed the assumption on $A_L(t)$ by extending the definition of $\mh_p$. 
\end{rem}

We have $\mh_p(A_L(t))={\rm max}\{|\text{coefficients}|_p\}$. 
If $L=K$ is a knot, then $A_L(t)=\Delta_K(t)$ satisfies $\Delta_K(1)=\pm1$ and its is primitive. 
Thus for each $n<\infty$, we have $\mh_p(A_L(t))=1$ and $\mu_p=0$. 
In regard to $d\geq2$-component link $L$, various examples with $\mh_p(A_L(t))<1$ and $\mu_p>0$ are given (\cite{KM2008}, \cite{KM2013},  \cite{Ueki3}). 

\begin{exa} \label{ex1} We list up all the examples of TLN-covers with non-trivial $\mu_p$ 
obtained from 2-component links $L$ in the Rolfsen table (\cite{Rolfsen1976}).  
We have $A_L(t)=(t-1)\Delta_L(t^{\epsilon_1},t^{\epsilon_2}), \epsilon_i\in\{\pm1\}$. 
We denote $f\dot{=}g$ if $f$ and $g$ coincide up to multiplication by units of $\Z[t^{\pm1}]$. 

(1) $L=4^2_1$. $\Delta_L(x,y)=1+xy$, $\Delta_L(t,t^{-1})=2$, 
$\mu_2=1$. 

(2) $L=6^2_1$. $\Delta_L(x,y)=1+xy+x^2y^2$, $\Delta_L(t,t^{-1})=3$, 
$\mu_3=1$. 

(3) $L=6^2_3$. $\Delta_L(x,y)=2-x-y+2xy$, $\Delta_L(t,t)=2(t^2-t+1)$, 
$\mu_2=1$. 

(4) $L=7^2_3$. $\Delta_L(x,y)=1-x-y+xy$, $\Delta_L(t,t)\,$ $\dot{=}\,\Delta_L(t,t^{-1})\,\dot{=}\,4(t^2-1)^2$, 
$\mu_2=2$. 
\end{exa}

\subsection{$p$-adic entropy $h_p$ and {\it $\mh_p$}} 
Noguchi studied $\Z$-covers of $S^3$ branched over knots (\cite{Noguchi2007}). 
Let $t$ denote the meridian action on the ($\Q$-)Alexander module $H_1(X_\infty,\Q)\cong \Q^l$, 
and let $\wh{t}$ denote the solenoidal system obtained as the Pontrjagin dual of $t$. 
He applied the result of Lind and Ward (\cite{LindWard1988}) to $\wh{t}$ and calculated the topological entropy $h_{\rm top}$ of $\wh{t}$, which coincides with Bowen's measure theoretic entropy. 

Giordano-Bruno and Virili (\cite{BrunoVirili2015}) studied the algebraic entropies $h_{\rm alg}$ of dynamical systems of $\Q$-linear spaces. They gave a formula parallel to that of \cite{LindWard1988}, so that 
$h_{\rm alg}$ of $t$ coincides with $h_{\rm top}$ of $\wh{t}$ above. 
In what follows, we just call them the entropy. 

These entropies have their origin in the Ergodic theory. They were introduced by Adler, Konheim, and McAndrew (\cite{AKM1965}). Various generalization and duality have been studied (\cite{DikranjanBruno2014} for instance).\\ 

Results of \cite{LindWard1988} and \cite{BrunoVirili2015} 
called the Yuzvinski formula or the Kolmogorov--Sinai formula 
can be stated with use of our $p$-adic Mahler measure as follows: 
\begin{prp} \label{Yuzvinski}
Let $\varphi \in \GL(l,\Q)$ with $l \in \N$ and let $B(t)\in \Q[t]$ denote its (monic) characteristic polynomial. 
Then the algebraic entropy of $\varphi \act \Q^l$ and the topological entropy of its Pontrjagin dual are given by 
$$
h=\sum_{p\leq\infty}h_p,\ \ \ h_p=\log \mh_p(B(t)),$$ 
where $h_p$ denotes the $p$-adic entropy of $\varphi\act \Q^l\otimes \Qp\cong \Qp^l$ or its Pontrjagin dual for $p\leq \infty$.
\end{prp}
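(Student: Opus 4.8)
The plan is to reduce the statement to the two cited sources by translating their conclusions into the language of $\mh_p$. First I would recall the precise outputs of \cite{LindWard1988} and \cite{BrunoVirili2015}: for an automorphism $\varphi$ of $\Q^l$ (equivalently a matrix in $\GL(l,\Q)$) with characteristic polynomial $B(t)=\prod_{i}(t-\beta_i)\in\Q[t]$, the topological entropy of the Pontrjagin dual system, and the algebraic entropy of $\varphi$ itself, are both equal to $\sum_{p\le\infty}\sum_{|\beta_i|_p>1}\log|\beta_i|_p$, the sum running over all places $p$ of $\Q$ (including the archimedean one, written $p=\infty$) and over the roots $\beta_i$ in a fixed algebraic closure embedded into $\Cp$. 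The content of the cited theorems is precisely that this local-global sum is finite and computes the entropy; I would take this as the black box to be invoked.

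Next I would match each local term with a $p$-adic Mahler measure. Writing $B(t)=\prod_i(t-\beta_i)$ with leading coefficient $1$ (it is monic, being a characteristic polynomial), Jensen's formula in the form of Theorem \ref{pJensen} together with its extension Theorem \ref{Mp-ext1} gives, for every $p\le\infty$,
\[
\log\mh_p(B(t))=\log|1|_p+\sum_{1<|\beta_i|_p}\log|\beta_i|_p=\sum_{1<|\beta_i|_p}\log|\beta_i|_p,
\]
where for $p=\infty$ this is the classical Jensen formula recalled in Subsection 2.1. Hence $h_p=\log\mh_p(B(t))$ is exactly the local entropy term appearing in the Lind--Ward / Giordano-Bruno--Virili expression, and summing over $p\le\infty$ yields $h=\sum_{p\le\infty}h_p$.

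There is a small compatibility point to address: the statement phrases $h_p$ as the $p$-adic entropy of $\varphi$ acting on $\Q^l\otimes\Qp\cong\Qp^l$. I would note that the characteristic polynomial is unchanged under the scalar extension $\Q\hookrightarrow\Qp$, so the roots $\beta_i$ with $|\beta_i|_p>1$ are the same whether computed over $\ol\Q\subset\Cp$ or after base change, and thus the local definition of $h_p$ via \cite{LindWard1988} (for $p=\infty$) and \cite{BrunoVirili2015} (for $p<\infty$, in the $\Qp$-linear setting) agrees with $\log\mh_p(B(t))$. I would also remark that only finitely many $h_p$ are nonzero, since $B(t)\in\Q[t]$ is monic with finitely many coefficients, so $|\beta_i|_p\le 1$ for all $i$ at every prime $p$ not dividing a denominator of the coefficients of $B$; this guarantees the sum $\sum_{p\le\infty}h_p$ is a finite sum, consistent with the finiteness asserted by the cited results.

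The main obstacle is not mathematical depth but bookkeeping: one must make sure the normalizations of entropy (base of logarithm, algebraic vs.\ topological, the Pontrjagin-duality identification) in \cite{LindWard1988} and \cite{BrunoVirili2015} are stated in the same convention as the $\log\mh_p$ here, and that the $p=\infty$ term is handled by the classical Jensen formula while the $p<\infty$ terms use Theorem \ref{Mp-ext1} (the extension allowing roots on $|z|_p=1$, which contribute $0$ and so are harmless). Once these conventions are aligned, the proposition is an immediate consequence of Jensen's formula applied place by place.
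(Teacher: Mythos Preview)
Your proposal is correct and matches the paper's approach: the paper does not give a separate proof but presents the proposition as a direct restatement of the Lind--Ward and Giordano-Bruno--Virili formulas in the language of $\mh_p$, the translation being exactly Jensen's formula applied at each place (Theorem \ref{pJensen}/\ref{Mp-ext1} for $p<\infty$ and the classical version for $p=\infty$). Your write-up simply makes this translation explicit, which is all that is needed.
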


In addition, suppose $\ds B(t)=\sum_{0\leq i\leq l} b_it^{d-i}\in\Q[t]$ with $d,l\in \N$. 
Then $\ds \sum_{p<\infty} h_p=\log s$ hols for $s={\rm lcm} \{b_i\}_i$, and 
the primitive polynomial $sB(t)\in \Z[t]$ satisfies $h=\log \mh(sB(t))$.\\ 

Now let $L$ be a link in $S^3$ and let $h_\infty: X_\infty\to X=S^3-L$ be a TLN-$\Z$-cover 
with the Alexander polynomial $\ds A_L(t)=\sum_{0\leq i\leq l} a_it^{d-i}\in \Z[t]$ with $a_0a_l\neq 0$. 
Then we have $H_1(X_\infty;\Q)\cong \Q^l$ as linear spaces over $\Q$ 
and the characteristic polynomial of the meridian action on $H_1(X_\infty;\Q)$ is given by the associated monic polynomial $A_L(t)/a_0$. Hence we have 
\begin{prp} \label{Z-entropy}  
The entropy of the $t$ action on $H_1(X_\infty;\Q)$ is given by 
$$h=\sum_{p\leq\infty}h_p,\ \ \ h_p=\log \mh_p(A_L(t)/a_0).$$ 
\end{prp}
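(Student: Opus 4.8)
The plan is to obtain this as a direct consequence of the Yuzvinski--Kolmogorov--Sinai formula (Proposition \ref{Yuzvinski}), after checking that its hypotheses are met. First I would reduce to the monic representative of $A_L(t)$. Since $a_0a_l\neq 0$, the Laurent polynomial $A_L(t)$ is supported on exactly the monomials $t^d,t^{d-1},\dots,t^{d-l}$ with extreme coefficients $a_0$ and $a_l$ nonzero, so $B(t):=A_L(t)/(a_0t^{d-l})\in\Q[t]$ is a monic polynomial of degree $l$ with nonzero constant term $a_l/a_0$. As $\mh_p$ is unchanged under multiplication by the units $t^{\pm1}$ of $\Cp[t^{\pm1}]$ (immediate from Jensen's formula, Theorems \ref{pJensen} and \ref{Mp-ext1}), we have $\mh_p(A_L(t)/a_0)=\mh_p(B(t))$ for every $p\leq\infty$, so it suffices to prove the formula with $B(t)$ in place of $A_L(t)/a_0$.

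Next I would record the structure of the rational Alexander module. We have $H_1(X_\infty;\Q)=H_1(X_\infty;\Z)\otimes_\Z\Q$, a finitely generated module over the principal ideal domain $\Q[t^{\pm1}]$, and, being a $\Z[t^{\pm1}]$-module, multiplication by $t$ is automatically invertible on it. Since $A_L(t)\neq 0$ and ${\rm Fitt}_{\Z[t^\Z]}H_1(X_\infty)=(A_L(t))$ for the TLN-cover (\cite[Lemma 3.1]{KM2008}), the module $H_1(X_\infty;\Q)$ is a torsion $\Q[t^{\pm1}]$-module whose order ideal is generated by $A_L(t)$; decomposing it by the structure theorem and normalizing the elementary divisors to monic polynomials shows $\dim_\Q H_1(X_\infty;\Q)=l$ and that the characteristic polynomial of the meridian action $t$ on $H_1(X_\infty;\Q)\cong\Q^l$ is $B(t)=A_L(t)/a_0$. (These facts are already recorded in the paragraph preceding the proposition, so in the body of the proof it is enough to cite them.) In particular $t$ defines an element $\varphi\in\GL(l,\Q)$ with monic characteristic polynomial $B(t)$.

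Finally I would apply Proposition \ref{Yuzvinski} to $\varphi=t\act\Q^l=H_1(X_\infty;\Q)$: it gives $h=\sum_{p\leq\infty}h_p$ with $h_p=\log\mh_p(B(t))=\log\mh_p(A_L(t)/a_0)$, where $h_p$ is the $p$-adic entropy of $\varphi\act\Q^l\otimes\Qp\cong\Qp^l$ and of its Pontrjagin dual. This is precisely the claimed formula. The step carrying the real content is the identification of the $\Q[t^{\pm1}]$-module structure of $H_1(X_\infty;\Q)$ and of the characteristic polynomial of $t$ on it; but this is standard Alexander-module theory for TLN-covers and is stated before the proposition, so the proof itself is essentially a formal invocation of Proposition \ref{Yuzvinski} together with the monomial-invariance of $\mh_p$.
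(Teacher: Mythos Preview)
Your proposal is correct and follows the same route as the paper: the paragraph immediately preceding the proposition already asserts $H_1(X_\infty;\Q)\cong\Q^l$ with characteristic polynomial the monic associate $A_L(t)/a_0$, and the proposition is then stated as a direct consequence of Proposition~\ref{Yuzvinski}. Your write-up simply unpacks these facts (normalizing by the monomial $t^{d-l}$ and invoking the structure theorem over $\Q[t^{\pm1}]$) and notes the monomial-invariance of $\mh_p$, which the paper leaves implicit.
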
 
By the product formula $\ds \prod_{p\leq \infty}|a_0|_p=1$, we also have 
$\ds h=\sum_{p\leq \infty} \log \mh_p(A_L(t))$. 
In addition, the primitive polynomial $A_L(t)/{\rm gcd}\{a_i\}_i\in \Z[t]$ satisfies the equality 
$h=\mh(A_L(t)/{\rm gcd}\{a_i\}_i)$.\\ 

If $L$ is a knot, then $A_L(t)=\Delta_L(t)$ is primitive, and similar results hold for the Alexander module $H_1(X_\infty)$ over $\Z$ (\cite[Theorem 1]{Noguchi2007}). 
If $L$ is a general link, then $A_L(t)$ is not necessarily primitive. The entropy $h_\Z$ of the meridian action on $H_1(X_\infty)$ over $\Z$ is given by $h_\Z=\log \mh (A_L(t))$ and satisfies $h_\Z=h+\log {\rm gcd}\{a_i\}_i$. 

\begin{exa} \label{ex2}
In the Rolfsen table (\cite{Rolfsen1976}), the first example with nontrivial entropy $h$ is $K=4_1$ (figure 8-knot). We have $\Delta_K(t)=t^2-3t+1$, $h=\log \mh_{\infty}(\Delta_K(t))=\log\frac{3+\sqrt{5}}{2}$. 
We list up all the 2-component links $L$ with nontrivial $h$ in the table: 

(1) $L=6^2_2$. $\Delta_L(x,y)=x+y-xy+x^2y+xy^2$, $\Delta_L(t,t)\,\dot{=}\,\Delta_L(t,t^{-1})\,\dot{=}\,2t^2-t+2$, 
$h=h_2=\log 2$. 

(2) $L=6^2_3$. $\Delta_L(x,y)=2-x-y+2xy$, $\Delta_L(t,t^{-1})\,\dot{=}\,t^2-4t+1$, $h=h_{\infty}=\log(2+\sqrt{3})$. 

(3) $L=7^2_1$. $\Delta_L(x,y)=1-x-y+xy-x^2y-xy^2+x^2y^2$. (i) $\Delta_L(t,t)=t^4-2t^3+t^2-2t+1$, 
$h=h_\infty=\log\frac{1+\sqrt{2}+\sqrt{2\sqrt{2}-1}}{2}$. 
(ii) $\Delta_L(t,t^{-1})\,\dot{=}\,2t^2-3t+2$, $h=h_2=\log 2$. 

(4) $L=7^2_2$. $\Delta_L(x,y)=1-x-y+3xy-x^2y-xy^2+x^2y^2$, $\Delta_L(t,t^{-1})\,\dot{=}\,2t^2-5t+2=2(t-2)(t-\frac{1}{2})$, $h_\infty=2$, $h_2=\log |1/2|_2=\log 2$, $h=h_\infty+h_2=\log \mh(2t^2-5t+2)=2\log 2$. 
\end{exa}
\subsection{$h_p$, $\mu_p$, and the leading coefficient $a_0$} 
We continue to study a TLN-$\Z$-cover of $(S^3,L)$. 
The leading coefficient $a_0$ of $A_L(t)$ has geometric information.  
For instance, if $L$ is fibered, then $a_0=1$ holds (e.g., \cite[Theorem 5.12]{Hillman2}). 

Now let $p$ be a prime number again. 
Then $h_p=\log \mh_p(A_L(t)/a_0)$ measures the ratio between the monic and primitive polynomials associated to $A_L(t)$, while  
$\mh_p(A_L(t))=p^{-\mu_p}$ measures the ratio between $A_L(t)$ itself and the primitive one. 

Since $\mh_p(A_L(t)/a_0)=\mh_p(A_L(t))/|a_0|_p$, we obtain a balance formula among $h_p,\mu_p,$ and $a_0$:

\begin{prp} \label{hp} 
$$-\log |a_0|_p=h_p+\mu_p \log p. \phantom{\ds \sum_i}$$
\end{prp}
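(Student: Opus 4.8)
The plan is to derive the identity purely formally by combining the two formulae already established in this section: the entropy formula $h_p=\log \mh_p(A_L(t)/a_0)$ of Proposition \ref{Z-entropy} and the Iwasawa formula $\log \mh_p(A_L(t))=-\mu_p\log p$ of Proposition \ref{A-mu}. The only additional ingredient needed is the behaviour of $\mh_p$ under multiplication by a constant.

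First I would record that for any nonzero $c\in\Cp$ and any $0\neq g(t)\in \Cp[t^{\pm1}]$ one has $\mh_p(c\,g(t))=|c|_p\,\mh_p(g(t))$. This is immediate from the Gauss norm description of $\mh_p$ in Proposition \ref{Mp-cal}: multiplying every coefficient of $g$ by $c$ multiplies $\max\{|a_i|_p\}_i$ by $|c|_p$. Alternatively it follows from Jensen's formula (Theorem \ref{pJensen}, or Theorem \ref{Mp-ext1} in the general case), since scaling $g$ by $c$ replaces the leading coefficient $a_0$ by $ca_0$ while leaving the roots $\alpha_i$, and hence each factor $\max\{|\alpha_i|_p,1\}$, unchanged. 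Applying this with $c=a_0^{-1}$ and $g(t)=A_L(t)$ gives $\mh_p(A_L(t)/a_0)=\mh_p(A_L(t))/|a_0|_p$.

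Taking logarithms of the last equation and substituting Propositions \ref{Z-entropy} and \ref{A-mu} then yields
$$h_p=\log \mh_p(A_L(t)/a_0)=\log \mh_p(A_L(t))-\log|a_0|_p=-\mu_p\log p-\log|a_0|_p,$$
and rearranging gives the claimed balance formula $-\log|a_0|_p=h_p+\mu_p\log p$.

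There is essentially no genuine obstacle: the proposition is a bookkeeping consequence of results proved earlier in the section. The only point requiring a word of care is that $a_0\neq 0$, so that $A_L(t)/a_0$ makes sense and is the monic characteristic polynomial of the meridian action used in Proposition \ref{Z-entropy}; this is part of the standing hypothesis $a_0a_l\neq 0$ on the TLN-cover. As a sanity check one may also note that both sides are nonnegative, since $h_p\geq 0$, $\mu_p\geq 0$, and $|a_0|_p\leq 1$ because $a_0\in\Z$.
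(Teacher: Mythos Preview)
Your proof is correct and essentially identical to the paper's: the paper derives the formula in the single line ``Since $\mh_p(A_L(t)/a_0)=\mh_p(A_L(t))/|a_0|_p$, we obtain a balance formula\ldots'', combining Proposition \ref{Z-entropy} and Proposition \ref{A-mu} exactly as you do. Your explicit justification of the scaling identity via the Gauss norm description (Proposition \ref{Mp-cal}) is a welcome bit of detail the paper leaves implicit.
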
 

\begin{exa} \label{ex3}
We have only one example of 2-component link whose $h_p,\mu_p$ are both nontrivial in the Rolfsen table (\cite{Rolfsen1976}). It is $L=9^2_{23}$. We have 
$\Delta_L(x,y)=x^2-2xy+y^2x^3-4x^2y-4xy-y^3+x^3y+2x^2y^2+xy^3$, $\Delta_L(t,t)=4t^2-10t+4=2(2t^2-5t+2)=2(2-t)(1-2t)$. 
$|a_0|_2=1/4$. 
$h_\infty=\log 2$, $h_2=\log |1/2|_2=\log 2$, $h=h_\infty+h_2=\log \mh_\infty(2t^2-5t+2)=2\log 2$. 
$\mh_2(\Delta_L(t,t))=|2|_2=1/2$, $\mu_2=1$. 
\end{exa} 

If $A_L(t)=a_0\prod_i(t-\alpha_i)$, then by Theorems \ref{pJensen}, \ref{Mp-ext1}, and Proposition \ref{Z-entropy}, we have $\ds h_p=\sum_{|\alpha_i|_p>1}\log |\alpha_i|_p$. Note the product formula $\ds \ds \prod_{p<\infty}|a_0|_p=1/|a_0|$. 
Then the previous proposition gives 

\begin{crl} \label{a_0}
$$\ds \log |a_0|=\sum_{p<\infty}\sum_{|\alpha_i|_p>1}\log |\alpha_i|_p+\sum_{p<\infty}\mu_p\log p. $$
\end{crl}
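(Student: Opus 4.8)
The plan is to derive Corollary \ref{a_0} purely formally from the balance formula of Proposition \ref{hp}, together with the Jensen-type expression for $h_p$ and the product formula for $a_0$. First I would recall that for a TLN-$\Z$-cover of $(S^3,L)$ with Alexander polynomial $A_L(t)=a_0\prod_i(t-\alpha_i)$, Proposition \ref{Z-entropy} identifies the $p$-adic entropy as $h_p=\log \mh_p(A_L(t)/a_0)$, and then Theorems \ref{pJensen} and \ref{Mp-ext1} (Jensen's formula, valid even with roots on $|z|_p=1$) applied to the monic polynomial $A_L(t)/a_0=\prod_i(t-\alpha_i)$ give $\log\mh_p(A_L(t)/a_0)=\sum_{1<|\alpha_i|_p}\log|\alpha_i|_p$, i.e. $h_p=\sum_{|\alpha_i|_p>1}\log|\alpha_i|_p$, for every $p<\infty$ (and also for $p=\infty$).

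Next I would sum the balance formula of Proposition \ref{hp}, namely $-\log|a_0|_p=h_p+\mu_p\log p$, over all finite primes $p$:
\begin{align*}
-\sum_{p<\infty}\log|a_0|_p
&=\sum_{p<\infty} h_p+\sum_{p<\infty}\mu_p\log p\\
&=\sum_{p<\infty}\sum_{|\alpha_i|_p>1}\log|\alpha_i|_p+\sum_{p<\infty}\mu_p\log p.
\end{align*}
For this to be legitimate I need the sums on the right to be finite: only finitely many primes $p$ divide the leading coefficient $a_0$ and the trailing coefficient $a_l$ of $A_L(t)\in\Z[t]$, so $\mu_p=0$ for all but finitely many $p$ by Proposition \ref{A-mu} and the fact that $\mh_p(A_L(t))={\rm max}\{|a_i|_p\}$ equals $1$ whenever $p$ divides none of the coefficients; likewise $h_p=0$ for all but finitely many $p$ since $|\alpha_i|_p\le 1$ for all $i$ once $p\nmid a_0$ (the $\alpha_i$ are then $p$-adic integers, being roots of the monic $A_L(t)/a_0\in\Zp[t]$). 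Thus both sums are finite and the termwise addition is valid.

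Finally, I would invoke the product formula $\prod_{p\le\infty}|a_0|_p=1$, which the paper rewrites as $\prod_{p<\infty}|a_0|_p=1/|a_0|$, to evaluate the left-hand side: $-\sum_{p<\infty}\log|a_0|_p=-\log\prod_{p<\infty}|a_0|_p=-\log(1/|a_0|)=\log|a_0|$. Substituting this into the displayed identity yields exactly
$$\log|a_0|=\sum_{p<\infty}\sum_{|\alpha_i|_p>1}\log|\alpha_i|_p+\sum_{p<\infty}\mu_p\log p,$$
which is the assertion of Corollary \ref{a_0}.

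The argument is almost entirely bookkeeping; the only genuine point requiring care — and the step I expect to be the main (mild) obstacle — is justifying that the infinite sums over primes are actually finite, so that one may rearrange and sum the local balance formulas. This reduces to the observation that $A_L(t)\in\Z[t]$ has only finitely many prime divisors among its coefficients, whence $h_p=\mu_p=0$ for all but finitely many $p$; this is where Proposition \ref{Mp-cal} (the $p$-adic Mahler measure is the Gauss norm) and Proposition \ref{A-mu} do the real work. Everything else is the product formula and Jensen's formula quoted verbatim.
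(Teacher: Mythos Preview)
Your proof is correct and follows essentially the same route as the paper: the paper also derives the corollary by combining the Jensen-type expression $h_p=\sum_{|\alpha_i|_p>1}\log|\alpha_i|_p$ (from Theorems \ref{pJensen}, \ref{Mp-ext1} and Proposition \ref{Z-entropy}) with the product formula $\prod_{p<\infty}|a_0|_p=1/|a_0|$ and Proposition \ref{hp}. Your additional care in justifying that the sums over primes are finite is a point the paper leaves implicit.
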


This formula generalizes \cite[Corollary 4]{Noguchi2007}. Indeed, if $L$ is a knot, then $\mu_p=0$ for any $p<\infty$, 
and hence $\ds \log |a_0|=\sum_{p<\infty}\sum_{|\alpha_i|_p>1}\log |\alpha_i|_p$ $\ds (=\sum_{p<\infty}h_p)$ holds.

\section{Purely $p$-adic notions} 
\subsection{Besser--Deninger $m_p$}

The notion of the $p$-adic logarithm $\log_p:\C_p^*\to \C_p$ is known (\cite[Chapter 5]{Washington}). 
It is first defined on the unit disc $|z-1|_p<1$ by the power series $\ds \log_p(1+z)=\sum_{n=1}^{\infty}\frac{(-1)^{n+1}z^n}{n}$, and then uniquely extended to whole $\C_p^*$ by the $p$-adic analytic continuation under the normalization $\log_p p=0$. 
It is sometimes called the Iwasawa $p$-adic logarithm. 
Besser and Deninger introduced the \emph{purely} $p$-adic logarithmic Mahler measure $m_p$ for $p$-adic analytic functions (\cite{BesserDeninger1999}). It is different from ours, and is defined by the Shnirel'man integral (\cite{Schnirelmann1938}): 
\begin{dfn} \label{mp}
For a $p$-adic analytic functions $f(z)$ with no zero on $|z|_p=1$, we put 
$$m_p(f(z)):=\int_{|z|_p=1} \log_p f(z)\frac{dz}{z}:=\lim_{n\in \N; {\rm gcd}(n,p)=1} \sum_{\zeta^n=1} \log_pf(z),$$
where the limit is taken with respect to the $p$-adic topology. 
\end{dfn} 
It satisfies Jensen's formula for Laurent polynomials: 
\begin{prp}[Jensen's formula] 
If $f(t)\in \C_p[t^{\pm1}]$ with no root on $|z|_p=1$, then the limit in Definition \ref{mp} exists. 
If $\ds f(t)=\sum_{0\leq i\leq l} a_it^{d-i}=a_0t^{d-l}\prod_{0\leq i\leq l} (t-\alpha_i)$ ($a_i, \alpha_i \in \C_p, a_0a_l\neq 0$), then we have 
\begin{align*} 
m_p(f(t)) &= \log_pa_l -\sum_{0<|\alpha_i|_p<1} \log_p \alpha_i \\
&=\log_p a_0 +\sum_{|\alpha_i|_p>1} \log_p \alpha_i .
\end{align*}
\end{prp}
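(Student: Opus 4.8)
Since $m_p$ and the Jensen formula for it are due to Besser and Deninger \cite{BesserDeninger1999}, the plan is to recall their proof, and the strategy is to reduce everything to linear factors. Recall that $\log_p\colon\Cp^*\to\Cp$ is a group homomorphism and that $(\Cp,+)$ is torsion-free, so $\log_p$ kills every root of unity; in particular $\log_p(-1)=0$ and $\frac1n\sum_{\zeta^n=1}\log_p\zeta=0$ for every $n$. Together with the $\Z$-linearity of the Shnirel'man integral this shows that $m_p$ is additive in $f$ (once the relevant limits are known to exist), that $m_p(c)=\log_p c$ for a nonzero constant $c$, and that $m_p(t)=0$. Hence, writing $f(t)=a_0t^{d-l}\prod_i(t-\alpha_i)$ with all $\alpha_i\neq0$ and, by hypothesis, $|\alpha_i|_p\neq1$, it suffices to compute $m_p(t-\alpha)$ for a single $\alpha$ with $|\alpha|_p\neq1$.

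First I would take $|\alpha|_p<1$. By the Krull sharpening $|\zeta-\alpha|_p=1$ on $|\zeta|_p=1$, and $\zeta-\alpha=\zeta(1-\alpha\zeta^{-1})$ with $|\alpha\zeta^{-1}|_p<1$, so the defining power series of $\log_p$ gives $\log_p(\zeta-\alpha)=-\sum_{k\ge1}\alpha^k\zeta^{-k}/k$. Averaging over $\zeta^n=1$ and using $\frac1n\sum_{\zeta^n=1}\zeta^{-k}=1$ if $n\mid k$ and $0$ otherwise collapses this to $-\sum_{m\ge1}\alpha^{nm}/(nm)$; since $\gcd(n,p)=1$ we have $|nm|_p=|m|_p\ge1/m$, so the strong triangle inequality bounds this tail by $|\alpha|_p^{\,n}$, which tends to $0$. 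Thus the limit exists and $m_p(t-\alpha)=0$. Next I would take $|\alpha|_p>1$: here $\zeta-\alpha=-\alpha(1-\zeta/\alpha)$ with $|\zeta/\alpha|_p<1$, so $\log_p(\zeta-\alpha)=\log_p\alpha-\sum_{k\ge1}\zeta^k/(k\alpha^k)$, and the same averaging gives $\log_p\alpha-\sum_{m\ge1}1/(nm\,\alpha^{nm})$, whose tail has absolute value $\le|\alpha|_p^{-n}\to0$; hence $m_p(t-\alpha)=\log_p\alpha$. Summing the contributions of all roots (and of $a_0$) yields the second displayed identity. The first identity then follows from it: $a_l/a_0=\pm\prod_i\alpha_i$ forces $\log_p a_l=\log_p a_0+\sum_i\log_p\alpha_i$, and since no $\alpha_i$ lies on $|z|_p=1$, subtracting $\sum_{|\alpha_i|_p<1}\log_p\alpha_i$ from $\log_p a_l$ leaves exactly $\log_p a_0+\sum_{|\alpha_i|_p>1}\log_p\alpha_i$.

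The one genuinely technical point is the tail estimate, i.e.\ swapping the finite sum over $\zeta$ with the series defining $\log_p$ on the open unit disc and then showing $\sum_{m\ge1}\alpha^{\pm nm}/(nm)\to0$ in $\Cp$ as $n\to\infty$ along $\gcd(n,p)=1$. This is precisely where the coprimality hypothesis is needed: it keeps $|nm|_p=|m|_p$ from decaying faster than $1/m$, so the geometric decay of $|\alpha^{\pm nm}|_p$ dominates. Everything else is formal manipulation with the homomorphism property of $\log_p$ and the usual symmetric-function relation between the coefficients and the roots, so I expect this estimate to be the only step requiring real care.
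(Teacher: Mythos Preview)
The paper does not give its own proof of this proposition; it simply states the result and attributes it to Besser--Deninger \cite{BesserDeninger1999}. Your reconstruction of their argument is correct and follows the natural route: reduce to linear factors via the homomorphism property of $\log_p$, expand $\log_p(1-x)$ as a power series on $|x|_p<1$, and use orthogonality of the characters $\zeta\mapsto\zeta^k$ on the $n$-th roots of unity to collapse the average.

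One small imprecision worth flagging: the claim that the strong triangle inequality ``bounds this tail by $|\alpha|_p^{\,n}$'' is not literally immediate from $|m|_p\ge 1/m$ alone. The term at $m=p^k$ has absolute value $p^k|\alpha|_p^{np^k}$, which could in principle exceed $|\alpha|_p^{\,n}$ for small $n$. However, for $n$ large enough that $p|\alpha|_p^{\,n(p-1)}<1$ the maximum over $m\ge1$ is indeed attained at $m=1$, so the tail is eventually bounded by $|\alpha|_p^{\,n}\to0$ and your conclusion stands. The same remark applies to the case $|\alpha|_p>1$.
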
 
Since $\log_p(z)$ does not vanish on $|z|_p=1$, we have no natural generalization of $m_p$ for $f(z)$ with zeros on $|z|_p=1$. 

\begin{rem} In the Shnirel'man integral, we have no term corresponding to $2\pi\sqrt{-1}$ in the complex case.In \cite{Mihara-singular}, its \emph{denormalization} and generalization are given, in which the corresponding terms called periods in $\mathbb{B}_{\rm dR}$ appear. \end{rem} 

By the definition of the Shnirel'man integral, an analogue of the asymptotic formula of resultants (Proposition \ref{asym}, \ref{asym-p}) immediately follows: 

\begin{prp} If $f(z)\in \Cp[t]$ with no root on $|z|_p=1$, then 
$$\lim_{n\in \N; (n,p)=1}\frac{1}{n}\log_p |R(f(t),t^n-1)|=m_p(f(t)).$$
\end{prp}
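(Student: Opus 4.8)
The plan is to obtain the identity directly from the product formula for resultants recalled in Subsection~2.1 together with the basic properties of the Iwasawa $p$-adic logarithm: $\log_p\colon\Cp^\times\to\Cp$ is a group homomorphism, it vanishes on every root of unity (a torsion element of $\Cp^\times$, while $\Cp$ is torsion-free, so $\log_p$ of a root of unity is $0$; in particular $\log_p(\pm1)=0$), and $\log_p p=0$ by normalization. Write $f(t)=a_0t^{d-l}\prod_{0\le i\le l}(t-\alpha_i)\in\Cp[t]$. Every root of unity $\zeta$ satisfies $|\zeta|_p=1$, so the hypothesis that $f$ has no zero on $|z|_p=1$ means $f$ and $t^n-1$ share no root; hence $R(f(t),t^n-1)\neq0$ and $f(\zeta)\neq0$ for all $n$-th roots of unity $\zeta$, so every $p$-adic logarithm appearing below is defined.

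First I would record, from the resultant formulas of Subsection~2.1 applied with $g(t)=t^n-1=\prod_{\zeta^n=1}(t-\zeta)$ (monic of degree $n$ with root set the $n$-th roots of unity), the identity
\[
R(f(t),t^n-1)=\pm\prod_{\zeta^n=1}f(\zeta),
\]
the overall sign being irrelevant since $\log_p(\pm1)=0$. Applying $\log_p$ and using additivity together with $\log_p p=0$ and the vanishing of $\log_p$ on roots of unity, one gets
\[
\log_p R(f(t),t^n-1)=\sum_{\zeta^n=1}\log_p f(\zeta).
\]
Dividing by $n$ and letting $n\to\infty$ through integers coprime to $p$, the right-hand side is precisely the $n$-th Shnirel'man partial sum defining $m_p(f(t))$ in Definition~\ref{mp}; it converges $p$-adically to $m_p(f(t))$, the limit existing by the Jensen-type proposition just stated. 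This is the asserted equality.

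I do not expect a genuine obstacle: the statement is a formal corollary of the resultant identity and the homomorphism property of $\log_p$, which is exactly why the excerpt says it ``immediately follows''. The only points meriting a line of care are (i) that ``$f$ has no zero on $|z|_p=1$'' is exactly the condition guaranteeing $R(f(t),t^n-1)\neq0$ and every $f(\zeta)\neq0$, so $\log_p$ is never applied to $0$; and (ii) that $\log_p$ annihilates the $\pm1$ coming from the resultant's sign convention together with any power of $p$, so that $\tfrac1n\log_p R(f(t),t^n-1)$ coincides on the nose with $\tfrac1n\sum_{\zeta^n=1}\log_p f(\zeta)$ and not merely up to a vanishing correction.
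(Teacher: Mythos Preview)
Your argument is correct and is exactly what the paper intends: it offers no detailed proof, merely remarking that the proposition ``immediately follows'' from the definition of the Shnirel'man integral. Your unpacking---using $R(f(t),t^n-1)=\pm\prod_{\zeta^n=1}f(\zeta)$, the additivity of $\log_p$, and $\log_p(\pm1)=0$ to identify $\tfrac{1}{n}\log_p R(f(t),t^n-1)$ with the $n$-th Shnirel'man partial sum---is precisely this ``immediate'' step made explicit, and your observation that ``no root on $|z|_p=1$'' guarantees every $f(\zeta)\neq0$ is the right hygiene check.

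One small remark: the aside about $\log_p$ annihilating ``any power of $p$'' is unnecessary and a bit misleading. The bars in the displayed formula should be read as a (harmless) sign, consistent with the paper's later applications to $|H_1(M_n)|\in\Z$; if one literally interpreted $|\,\cdot\,|$ as the $p$-adic norm, the left-hand side would be identically $0$ and the statement would fail. Your core argument already handles the sign via $\log_p(\pm1)=0$, so you can simply drop the clause about powers of $p$.
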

Since $R(f(t),t^n-1)=R(f(t),\nu_n(t))\times f(1)$, if $f(1)\neq 0$, then 
the convergence of $\{\ds \frac{1}{n} \log_p|R(f(t),\nu_n(t))|\}_n$ is equivalent to 
that of $\{\ds \frac{1}{n}\log_p |f(1)|\}_n$, and hence to $\log_p|f(1)|=0$. \\ 

We apply their theory to the TLN-$\Z$-cover of $S^3$ branched over a $d$-component link $L$. Put $\Lambda=\Z[t^{\pm1}]$. 
We use the notation in the previous section. We have $A_L(t)=(t-1)^dH_L(t)$. 
The Alexander module admits a natural direct sum decomposition $H_1(X_\infty) \cong \Z^{d-1}\oplus H_1'(X_\infty)$ of $\Z[t^\Z]$-modules with ${\rm Fitt}_{\Z[t^\Z]}H_1'(X_\infty)=(H_L(t))$. 

If $d=1$, then $A_L(t)$, $H_L(t)$ and the Alexander polynomial $\Delta_L(t)$ coincide. 
Since $A_L(1)=\pm1$,  by Proposition \ref{Fox}, we have 
$|H_1(M_n)|=|R(A_L(t),\nu_n(t)))|=|R(A_L(t),t^n-1)|$. 
We obtain an analogue of Theorem \ref{pGS} with respect to the $p$-adic norm: 
If $A_L(t)$ has no root on $|z|_p=1$, then 
$$\lim_{n\in \N; (n,p)=1} \frac{1}{n}\log_p|H_1(M_n)|=m_p(A_L(t)).$$

For the cases with $d>1$, note that the trivial action of $t$ on $\Z$ is not expansive and $\hbar_p$ is not defined. It comes from the fact that $\{(\log_p n)/n\}_n$ is not convergent in $\C_p$. 
However, for any $d$, if $H_L(z)$ has no roots on $|z|_p=1$, then the meridian action $t$ on the Hosokawa module $\Lambda/(H_L(t))$ is expansive, and its purely $p$-adic entropy is given by $\hbar_p=m_p(H_L(t))$. 

For a general $d \in \N_{>0}$, we have $|H_1(M_n)|=|R(A_L(t),\nu_n(t)))|=
|R(H_L(t),\nu_n(t))$ $R((t-1)^{d-1},\nu_n(t))|
=|R(H_L(t),t^n-1)/R(H_L(t),t-1)|\times n^{d-1}
=|R(H_L(t),t^n-1)/H_L(1)|\times n^{d-1}.$
Hence a modified asymptotic formula for links is stated as follows: 
\begin{thm} \label{pureGS} 
Let $L$ be a $d$-component link in $S^3$ and 
suppose that the Hosokawa polynomial $H_L(t)$ has no root on $|z|_p=1$. 
Then the TLN-$\Z$-cover over $(S^3,L)$ satisfies 
$$\lim_{n\in \N; (n,p)=1}\frac{1}{n}\log_p \frac{|H_1(M_n)||H_L(1)|}{n^{d-1}}=m_p(H_L(t))$$
with respect to the $p$-adic topology. 
\end{thm}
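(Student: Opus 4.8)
The plan is to reduce the statement to the $p$-adic resultant asymptotic for $R(f(t),t^{n}-1)$ proved just above (the $m_{p}$-analogue of Propositions~\ref{asym} and~\ref{asym-p}), applied to $f=H_{L}(t)$, using the resultant computation already displayed before the statement. Concretely, I would first record the exact identity
$$\frac{|H_{1}(M_{n})|\,|H_{L}(1)|}{n^{d-1}}=\bigl|R(H_{L}(t),t^{n}-1)\bigr|\qquad(n\in\N_{>0}),$$
which is merely a rearrangement of the chain $|H_{1}(M_{n})|=|R(A_{L}(t),\nu_{n}(t))|=|R(H_{L}(t),\nu_{n}(t))|\cdot|R((t-1)^{d-1},\nu_{n}(t))|=|R(H_{L}(t),t^{n}-1)/H_{L}(1)|\cdot n^{d-1}$ displayed just before the statement; this uses Proposition~\ref{Fox} for the TLN-cover, the factorization $A_{L}(t)=(t-1)^{d-1}H_{L}(t)$, multiplicativity of the resultant in each argument, the evaluations $|R(t-1,\nu_{n}(t))|=|\nu_{n}(1)|=n$ and $|R(H_{L}(t),t-1)|=|H_{L}(1)|$, and the relation $t^{n}-1=(t-1)\nu_{n}(t)$. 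I would point out that the hypothesis that $H_{L}(t)$ has no root on $|z|_{p}=1$ enters twice here: since $|1|_{p}=1$, it forces $H_{L}(1)\neq0$, so the left-hand side is a well-defined nonzero integer; and since every root of unity $\zeta\neq1$ satisfies $|\zeta|_{p}=1$, no such $\zeta$ is a root of $A_{L}(t)=(t-1)^{d-1}H_{L}(t)$, hence $R(A_{L}(t),\nu_{n}(t))\neq0$ and $M_{n}$ is a $\Q$HS$^{3}$ for every $n$, so $|H_{1}(M_{n})|$ is finite and the limit genuinely ranges over all $n$ coprime to $p$.

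It then remains to apply the $p$-adic resultant asymptotic above to $f=H_{L}(t)$, which is legitimate precisely because $H_{L}(t)$ has no root on $|z|_{p}=1$; this gives $\lim_{n;\,(n,p)=1}\frac{1}{n}\log_{p}|R(H_{L}(t),t^{n}-1)|=m_{p}(H_{L}(t))$ with respect to the $p$-adic topology. Since, for each fixed $n$, $\frac{1}{n}\log_{p}\bigl(|H_{1}(M_{n})|\,|H_{L}(1)|/n^{d-1}\bigr)$ coincides with $\frac{1}{n}\log_{p}|R(H_{L}(t),t^{n}-1)|$ by the displayed identity, the asserted limit follows.

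The one point that deserves care --- and the reason the correction factors $|H_{L}(1)|$ and $n^{d-1}$ cannot be dropped or pulled outside the limit --- is that over $\C_{p}$ one may not distribute $\frac{1}{n}\log_{p}$ across the product $|H_{1}(M_{n})|\cdot|H_{L}(1)|\cdot n^{-(d-1)}$: for $(n,p)=1$ one has $|1/n|_{p}=1$, so none of $\frac{1}{n}\log_{p}|H_{1}(M_{n})|$, $\frac{1}{n}\log_{p}|H_{L}(1)|$, $\frac{1}{n}\log_{p}n$ need converge individually (a sequence $c/n$ with $(n,p)=1$ converges $p$-adically only if $c=0$). One is forced to keep the whole expression as the single resultant $R(H_{L}(t),t^{n}-1)$, whose normalized $p$-adic logarithm does converge: a root $\alpha_{i}$ with $|\alpha_{i}|_{p}>1$ contributes $\frac{1}{n}\log_{p}(\alpha_{i}^{n}-1)=\log_{p}\alpha_{i}+\frac{1}{n}\log_{p}(1-\alpha_{i}^{-n})\to\log_{p}\alpha_{i}$, a root with $|\alpha_{i}|_{p}<1$ contributes a term tending $p$-adically to $0$, and there are no roots with $|\alpha_{i}|_{p}=1$. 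For the same reason one cannot replace $t^{n}-1$ by $\nu_{n}(t)$ unless $H_{L}(1)=\pm1$, which is exactly why the statement is phrased with the extra factors. I do not expect a genuine obstacle here: once the resultant identity is written down, the theorem is immediate from the earlier proposition.
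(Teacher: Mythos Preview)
Your proposal is correct and follows essentially the same route as the paper: the paper derives the identity $|H_{1}(M_{n})|=|R(H_{L}(t),t^{n}-1)/H_{L}(1)|\cdot n^{d-1}$ in the paragraph immediately preceding the theorem and then invokes the $m_{p}$-resultant asymptotic for $f=H_{L}$, exactly as you do. Your additional remarks on why the hypothesis forces $H_{L}(1)\neq 0$ and why the factors $|H_{L}(1)|$ and $n^{d-1}$ cannot be split off under $\tfrac{1}{n}\log_{p}$ are helpful clarifications that the paper leaves implicit.
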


\subsection{Deninger's $\hbar_p$} 
The topological entropy of a dynamical system (an automorphism $\varphi$ on a space $X$) is sometimes calculated by using the numbers of the fixed points (e.g., \cite[Section 2]{EverestWard1999}): 
$$\ds h(\varphi)=\lim_{n\to \infty} \frac{1}{n}\log|{\rm Fix}(\varphi^n)|.$$ 
Deninger defined \emph{purely} $p$-adic entropy $\hbar_p(\varphi)$ by using the numbers of the fixed points and $p$-adic logarithm under certain situations: 
$$\ds \hbar_p(\varphi):=\lim_{n\to \infty} \frac{1}{n}\log_p |{\rm Fix}(\varphi^n)|.$$ 
A Laurent polynomial $f$ 
has no zeros on the $p$-adic unit torus 
if and only if the associated solenoidal dynamical system is expansive. 
Under those conditions, we have $\hbar_p=m_p(f)$ (\cite[Theorem 1]{Deninger2009}). \\ 

We consider the TLN-$\Z$-cover over a $d$-component link $L$ in $S^3$ again. 
We consider the meridian action on the Alexander module $H_1(X_\infty)\cong \Z^{d-1}\oplus \Lambda/(H_L(t))$. 
Since $\{(\log_p n)/n\}_n$ is not convergent in $\C_p$, 
the trivial action of $t$ on $\Z$ is not expansive. 
Therefore, if $d>1$, then $\hbar_p$ is not defined for $t\act H_1(X_\infty)$. However, we have a formula for a direct summand:   

\begin{prp}
For any $d$, if $H_L(z)$ has no root on $|z|_p=1$, then the meridian action on the Hosokawa module 
$H_1'(X_\infty)$ 
is expansive, and its purely $p$-adic entropy is given by $\hbar_p=m_p(H_L(t))$. 
\end{prp}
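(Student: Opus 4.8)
The plan is to compute $\hbar_p$ of the meridian action on $M':=H_1'(X_\infty)$ by reducing, via the structure theory of $\Lambda$-modules ($\Lambda=\Z[t^{\pm1}]$), to the elementary cyclic modules handled by Deninger, and to read off expansiveness from the same reduction. First I would record the dictionary: the hypothesis forces $H_L(t)\neq0$, so the decomposition $H_1(X_\infty)\cong\Z^{d-1}\oplus M'$ together with ${\rm Fitt}_\Lambda(M')=(H_L(t))$ shows that $M'$ is a finitely generated torsion $\Lambda$-module; the associated solenoidal system is the Pontryagin dual $(\wh{M'},\wh{t})$, and duality identifies ${\rm Fix}(\wh{t}^{\,n})$ with $\wh{M'/(t^n-1)M'}$, so $|{\rm Fix}(\wh{t}^{\,n})|=|M'/(t^n-1)M'|$ whenever the right-hand side is finite. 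Since every root of unity lies on $|z|_p=1$ and $H_L$ has no zero there, $H_L(\zeta)\neq0$ for all roots of unity $\zeta$, hence these quotients are finite for every $n$.

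Next I would pass to an elementary module. By the structure theorem over the two-dimensional UFD $\Lambda$ there is a pseudo-isomorphism (a $\Lambda$-homomorphism with finite kernel and cokernel) $M'\to E:=\bigoplus_j\Lambda/(g_j^{e_j})$ with the $g_j$ irreducible in $\Lambda$; pseudo-isomorphisms preserve characteristic ideals, and since ${\rm Fitt}_\Lambda(M')=(H_L)$ is already principal it equals the characteristic ideal, so $\prod_j g_j^{e_j}\doteq H_L(t)$ up to a unit $\pm t^k$ of $\Lambda$. For a direct sum fixed-point counts multiply, so $|E/(t^n-1)E|=\prod_j|\Z[t]/(g_j^{e_j},t^n-1)|=|R(H_L(t),t^n-1)|$ (using $|\Z[t]/(f,t^n-1)|=|R(f,t^n-1)|$ for the monic $t^n-1$, multiplicativity of the resultant in the first variable, and $|R(\text{unit},t^n-1)|=1$). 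Each $g_j$ divides $H_L$, so $g_j$, hence $g_j^{e_j}$, has no zero on $|z|_p=1$; by Deninger's theorem \cite[Theorem 1]{Deninger2009} — the constant case $g_j=q$ a rational prime being included, $\Lambda/(q)=\F_q[t^{\pm1}]$ being the full $q$-shift, expansive with $\hbar_p=\log_pq=m_p(q)$ (equal to $0$ when $q=p$ by $\log_pp=0$) — each factor $\wh{\Lambda/(g_j^{e_j})}$ is expansive with purely $p$-adic entropy $m_p(g_j^{e_j})$. Hence $\wh{E}$ is expansive and $\hbar_p(\wh{E})=\sum_j m_p(g_j^{e_j})=m_p\bigl(\prod_j g_j^{e_j}\bigr)=m_p(H_L(t))$, the last two equalities by additivity of $m_p$ (Jensen's formula) and $m_p(\pm t^k)=0$.

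Finally I would transfer along the pseudo-isomorphism. Expansiveness of $\wh{M'}$ follows because it differs from $\wh{E}$ only by a finite-group factor; and applying the snake lemma to the pseudo-isomorphism, together with $|A[t^n-1]|=|A/(t^n-1)A|$ for any finite abelian group $A$, shows that $|M'/(t^n-1)M'|$ and $|E/(t^n-1)E|$ differ by a ratio taking only finitely many values (a ratio of orders of subquotients of the fixed finite kernel and cokernel). Therefore $\tfrac1n\log_p$ of this ratio tends to $0$ in $\Cp$, so $\hbar_p(\wh{M'})=\hbar_p(\wh{E})=m_p(H_L(t))$. The hard part is exactly this transfer: one must control how modifying $M'$ by a finite $\Lambda$-module affects the fixed-point counts and hence $\hbar_p$, the key point being that $\log_p$ sends a bounded set of positive integers into a finite subset of $\Cp$, so a bounded multiplicative error disappears after dividing by $n$. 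An alternative avoiding the structure theorem would be to relate $|M'/(t^n-1)M'|$ directly to $|H_1(M_n)|$ through the homology of the unbranched cyclic covers and then invoke Theorem \ref{pureGS}; there the comparable difficulty is bookkeeping the branching correction and the factor $n^{d-1}$.
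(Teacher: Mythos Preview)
The paper does not argue through any structure theory: it simply identifies the Hosokawa module with the cyclic $\Lambda$-module $\Lambda/(H_L(t))$ (see the displayed isomorphism $H_1(X_\infty)\cong\Z^{d-1}\oplus\Lambda/(H_L(t))$ immediately preceding the proposition, and the parallel sentence in Section~4.1) and then quotes \cite[Theorem~1]{Deninger2009} for $f=H_L$. That is the entire proof.

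Your route via pseudo-isomorphism is more ambitious, but the transfer step has a genuine gap. You assert that because the ratio $|M'/(t^n-1)M'|\big/|E/(t^n-1)E|$ takes only finitely many values, $\tfrac1n\log_p$ of it tends to $0$ in $\Cp$. This is Archimedean reasoning and it is false $p$-adically. The limit defining $\hbar_p$, like the Shnirel'man integral defining $m_p$, runs over $n$ with $\gcd(n,p)=1$, and for such $n$ one has $|1/n|_p=1$; thus for any fixed nonzero $c\in\Cp$ the sequence $c/n$ satisfies $|c/n|_p=|c|_p$ and does \emph{not} converge to $0$. Concretely, your own snake-lemma bookkeeping gives (using $E[t^n-1]=0$ and $|A[t^n-1]|=|A/(t^n-1)A|$ for finite $A$) that the ratio equals $|K[t^n-1]|$, where $K$ is the finite kernel of the pseudo-isomorphism. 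This is a periodic, generally nonconstant function of $n$; if $|K|$ has a prime factor $q\neq p$ then $\log_p|K[t^n-1]|$ is a nonconstant periodic sequence of nonzero elements of $\Cp$, and $\tfrac1n\log_p|K[t^n-1]|$ need not converge at all. In that situation $\hbar_p(\wh{M'})$ may simply fail to exist, so the conclusion cannot be rescued by a sharper estimate. To make your argument go through you must either arrange the pseudo-isomorphism to have trivial (or $p$-primary) defect, or---as the paper does---work directly with the cyclic model $\Lambda/(H_L(t))$.
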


\begin{exa} The first example of a knot in Rolfsen's table for which $\hbar_p$ is defined is $L=5_2$ (the 3-twist knot). We have $A_L(t)=\Delta_L(t)=2t^2-3t+2$. Its roots $\alpha,\beta$ satisfy $|\alpha\beta|_2=1$ and $|\alpha+\beta|_2=|3/2|_2=2>1$. Let $\alpha=(3-\sqrt{-7})/4$ denote the larger root. 
Then we have $\hbar_2=m_2(\Delta_L(t))=\log_2(3-\sqrt{-7})$. 

Among the examples of links we have seen, $\hbar_p$ is defined only for the following cases with $p=2$: 

Example \ref{ex2} (1) $L=6^2_2$, $H_L(t)\dot{=}2t^2-t+2$. 
Let $(1-\sqrt{-15})/4$ denote the larger root. Then we have 
$\hbar_2=m_2(A_L(t)/2) = \log_2 (1-\sqrt{-15})$. 

(3) $L=7^2_1$. (ii) $H_L(t)\dot{=}2t^2-3t+2$.  
Let $(3-\sqrt{-7})/4$ denote larger root. Then 
$\hbar_2=\log_2 (3-\sqrt{-7})$. 

(4) $L=7^2_2$, $H_L(t)\dot{=}2t^2-5t+2=2(t-2)(t-\frac{1}{2})$.  
$\hbar_2=0$.  
\end{exa}


\section{Profinite cyclic covers} 
We study entropies of the meridian actions on modules associated to profinite cyclic covers. 

\subsection{$\Zp$-covers} \label{Zp-cover} 
Let $L$ be a link in $S^3$. A \emph{branched $\Zp$-cover} over $(S^3,L)$ is an inverse system $\{M_{p^n}\to S^3\}_n$ of branched $\Z/p^n\Z$-covers branched over $L$. 
If we fix a $\Z$-cover of $S^3-L$, then we obtain a branched $\Zp$-cover from its subcovers. 
A branched $\Zp$-cover is not necessarily obtained from a $\Z$-cover, because 
the image of $\Z^2\to \Zp$ is not necessarily sent to $\Z \subset \Zp$ by automorphisms of $\Zp$. 

We have an isomorphism $\Zp[[t^{\Zp}]]\cong \Zp[[T]]; t\mapsto 1+T$. 
The Iwasawa module (pro-$p$ Alexander module) $\mca{H}_p:=\varprojlim_n H_1(M_{p^n},\Zp)$ of a branched $\Zp$-cover is a finitely generated torsion $\Zp[[T]]$-module (\cite{Ueki2}). 
Let $\tau:H_1(S^3-L)\to \Zp; t_i\mapsto z_i$ ($z_i\in \Zp^\times$) be a homomorphism. 
Then $\Ker$ of the composite of $\tau$ and the natural surjections $\Zp \surj \Z/p^n\Z$ defines an inverse system of $\Z/p^n\Z$-covers, and hence a totally branched $\Zp$-cover associated to $\tau$. 
The Fitting ideal of $\mca{H}_p$ over $\Zp[[t^{\Zp}]]$ is generated by $A_L(t)=(t-1)\Delta_L(t^{z_1},\ldots,t^{z_d})$. 

By the $p$-adic Weierstrass preparation theorem \cite[Theorem 7.3]{Washington}, we have 
$A_L(1+T)=p^{\mu_p}g(T)u(T)$ for a distinguished polynomial $g(T)$ (that is, a monic polynomial with every lower coefficient divisible by $p$) and $u(T)\in \Zp[[T]]^\times$. 
All the roots of $g(T)$ are on $|z|_p<1$ and $\lambda_p=\deg(g(T))$ holds. 
If $u(T)$ has a zero on the domain of convergence, then it is on $|z|_p\geq 1$. 
We have $\ds \mca{H}\otimes \Q \cong \bigoplus_i \Qp[[T]]/(g_i(T))$ and $\ds \prod_i g_i(T)=g(T)$ for some $g_i(T)$'s. Thus we have $\Q_p[[T]]/g_i(T)\cong \Qp^{\deg (g_i)}$ and hence $\mca{H}_p\otimes \Q\cong \Q_p^{\lambda_p}$ as linear spaces over $\Qp$. 
The characteristic polynomial of the meridian action (the $t$ action) on $\mca{H}_p\otimes \Q$ is $g(t-1) \in \Z[t]$, 
and hence its entropy is given by $$h=h_p=\mh_p(g(t-1))=0.$$ 

\subsection{$\wh{\Z}$-covers} 
Next, we consider branched $\wh{\Z}$-covers. 
Note that the profinite integer ring admits the decomposition $\wh{\Z}= \prod_p \Zp$ by the Chinese remainder theorem. 
A \emph{branched $\wh{\Z}$-cover} over $(L,S^3)$ is an inverse system $\{M_n\to S^3\}_n$ of branched $\Z/n\Z$-covers branched over a link. 
Let $\tau: H_1(S^3-L)\to \wh{\Z}; t_i\mapsto z_i$, $z_i\in \wh{\Z}^\times$ be a homomorphism. 
Then the kernels of the composites of $\tau$ and the natural surjections $\wh{\Z}\surj \Z/n\Z$ define an inverse system of branched $\Z/n\Z$-covers, and hence a branched $\wh{\Z}$-cover associated to $\tau$. 
We put $A_L(t):=(t-1)\Delta_L(t^{z_1},\ldots,t^{z_d})$ and write $A_L(t)=(t-1)^\delta H_L(t)$, $H_L(t)\in \Z[[t^{\wh{\Z}}]]$, $H_L(1)\neq 0$. 
There is a natural isomorphism between $H_1(M_n;\wh{\Z})$ and the profinite completion $\wh{H}_1(M_n)$ of $H_1(M_n)$. There is a natural direct sum decomposition ${H}_1(M_n)\congto \Z/n\Z^{\oplus \delta}\oplus \wh{H}_1'(M_n)$ with $\delta\in \N$ such that the Fitting ideal of $\wh{H}_1'(M_n)$ over $\wh{\Z}[t^{\Z/n\Z}]$ is given by 
${\rm Fitt}_{\Z[t^{\Z/n\Z}]}\wh{H}_1'(M_n)=(H_L(t))\subset \wh{\Z}[t^{\Z/n\Z}].$ 

We define the completed Alexander module by putting $\mca{H}:=\varprojlim_n H_1(M_n,\wh{\Z})$. 
This module is a finitely generated $\wh{\Z}[[t^{\wh{\Z}}]]$-module. Indeed, let $z_i=(z_{i,n}\mod n)_n$ with $z_{i,n}\in \Z$ for each $i$. Let $h_{\infty,n}$ denote the $\wh{\Z}$-cover corresponding to the map $\tau_n: H_1(S^3-L)\mapsto \wh{\Z}; t_i\mapsto z_{i,n}$ for each $n$. Then the completed Alexander module $\mca{H}_n$ of $h_{\infty,n}$ is a finitely generated $\wh{\Z}[[t^{\Z}]]$-module by a similar argument to \cite[Lemma 11]{Ueki5}. Since the ranks of $\mca{H}_n$'s are bounded and $\mca{H}$ is approximated by $\mca{H}_n$'s, $\mca{H}$ is a finitely generated $\wh{\Z}[[t^{\Z}]]$-module. 
We have a natural direct sum decomposition $\mca{H}\cong \wh{\Z}^{\oplus \delta} \oplus \mca{H}'$ such that the Fitting ideal of $\mca{H}'$ is $(H_L(t))\subset \wh{\Z}[[t^{\Z}]]$. 

A branched $\wh{\Z}$-cover which cannot be obtained from a $\Z$-cover may admit $A_L(t)$ in $\Z[t]$: 

\begin{exa} 
Let $L=K_1\cup K_2\cup K_3 \subset S^3$ be a 3-component link with $\Delta_L(x,y,z)=B(xyz)\in \Z[xyz]$. 
Take $u\in \wh{\Z}^\times$ with $u\neq \pm1$. Let $t_i$ denote the meridian of $K_i$ in $H_1(S^3-L)$ for each $i$. Consider tha map $\tau:H_1(S^3-L)\to \wh{\Z}; t_1\mapsto 1, t_2\mapsto u, t_3\mapsto -u$. Then it defines a branched $\wh{\Z}$-cover which cannot be obtained from a $\Z$-cover and admits the characteristic polynomial $A_L(t)=(1-t)\Delta_L(t,t^u,t^{-u})=(1-t)B(t)\in \Z[t]$. 

There is only one non-trivial example in the Rolfsen table (\cite{Rolfsen1976}): 
the link $L=8^3_7$ satisfies $\Delta_L(x,y,z)=1-xyz$, $A_L(t)=(1-t)\Delta_L(t^u,t^{-u},t)=(1-t)^2$. 
\end{exa}

A $\wh{\Z}$-cover is approximated by $\Z$-covers, that is, every layer can be obtained as a quotient of a $\Z$-cover. Hence a generalization $$|\wh{H}_1(M_n)|=|R(A_L(t),\nu_n(t))|$$ of Fox's formula (Proposition \ref{Fox}) holds. 
Therefore, if $A_L(t) \in \Z[t]$, then for any $p\leq \infty$, generalizations 
of asymptotic formulae with use of $p$-adic Mahler measures (Theorems \ref{pGS}) hold: 
$$\ds \lim_{{\rm value}\neq 0}|H_1(M_n)|_p=\mh_p(A_L(t)).$$ 
In addition, if $H_L(t):=A_L(t)/(t-1)^{d-1}$ has no root on roots of unity, then 
a generalization of the purely $p$-adic version (Theorem \ref{pureGS}) for each $p<\infty$ holds: 
$$\ds \lim_{n\in \N; (n,p)=1}\frac{1}{n}\log_p \frac{|H_1(M_n)|}{n^{d-1}}=m_p(H_L(t)).$$

In the following, we consider a branched $\wh{\Z}$-cover with $A_L(t) \in \Z[t]$ and study the entropy of the meridian action ($t$-action). 
Note the decomposition $\wh{\Z}[[t^{\wh{\Z}}]]\cong \prod_p \Zp[[t^{\wh{\Z}}]]$. 
Let $\restprod_p \Qp$ denotes the restricted product of $\Qp$'s with respect to open subgroups $\Zp<\Qp$. 

Here we assume $\mca{H}\otimes \Q\cong (\restprod_p \Qp)^{{\rm deg}(A_L(t))}$. 
Then for each prime number $p$, the degree of $A_L(t) \mod p$ in $\Fp[t]$ coincides with that of $A_L(t)\in \Z[t]$. Indeed, since $\mca{H}$ is a quotient of compact Hausdorff group, it has no $p$-divisible factor. 
Hence the dimension of $\mca{H}\otimes \Q$ over $\Qp$ and that of $\mca{H}/p\mca{H}$ over $\Fp$ must be coincide. 
Therefore, the leading coefficient of $A_L(t)$ is not divisible by any $p$. 
Since the characteristic polynomial of the $t$-action on each $\Qp^{{\rm deg}(A_L(t))}$ is $A_L(t)$, 
its $p$-adic entropy is given by $h_p=\log \mh_p(A_L(t))=0$. 

The pontryagin dual of $\restprod_p \Qp$ is isomorphic to $\restprod_p\Qp$. 
We can replace the ad\`ele ring $\bb{A}_\Q$ by $\restprod_p\Qp$ in the argument of \cite[Lemmas 4.3-4.5]{LindWard1988} with use of results of \cite{Bowen1971tams-e} and \cite{WaltersSpringer1982}. 
Since the finite set $P$ of prime numbers with contribution is empty, 
the topological entropy $h$ of the $t$-action on the dual of $\mca{H}_\Q$ is given by $h=0$.

\subsection*{Acknowledgment}
I would like to express my sincere gratitude to Mikio Furuta and Yuichiro Taguchi for their precise questions at the thesis defense. 
I am grateful to Abhijit Champanerkar, Teruhisa Kadokami, Matilde Lal\'{i}n, Yoshihiko Matsumoto, Yasushi Mizusawa, Takayuki Morisawa, Masanori Morishita,  Hirofumi Niibo, Yuji Terashima, and people at the coffee time in Komaba for helpful comments. 
I would like to thank Tomoki Mihara for precious guidance to $p$-adic analysis. 
I am also grateful to Dohyeong Kim for inviting me to the beautiful environment of POSTECH IBS-CBP, 
Noburu Ito, Kouki Taniyama, and Seiken Saito for serving important opportunities to give talks, 
my family and friends for essential support. 
I was partially supported by Grant-in-Aid for JSPS Fellows (25-2241). 



\bibliographystyle{amsalpha}
\bibliography{/Users/uekijun/Dropbox/refs1}


\ \\
Jun Ueki \\
Graduate School of Mathematical Sciences, The University of Tokyo, \\
3-8-1 Komaba, Meguro-ku, Tokyo, 153-8914, Japan\\
E-mail: \url{uekijun46@gmail.com} 


\end{document}